\theoremstyle{plain}
\newtheorem{theorem}{Theorem}[section]
\newtheorem{lemma}[theorem]{Lemma}
\newtheorem{proposition}[theorem]{Proposition}
\newtheorem{remark}[theorem]{Remark}
\theoremstyle{definition}
\theoremstyle{remark}
\numberwithin{equation}{section}
\newenvironment{oss}{\begin{remark} \begin{rm}}{\end{rm} \end{remark}}
\renewcommand{\O}{\Omega}
\renewcommand{\o}{\omega}
\newcommand{\R}{{\mathbb R}}
\newcommand{\N}{{\mathbb N}}
\newcommand{\weakst}{\stackrel{\ast}{\rightharpoonup}}
\newcommand{\weak}{\rightharpoonup}
\def\Div{\textup{div}\,}
\def\Curl{\textup{curl}\,}
\newcommand{\e}{\varepsilon}
\newcommand{\tu}{\widetilde{u}}
\newcommand{\tv}{\widetilde{v}}
\newcommand{\sol}{{\rm sol}}
\newcommand{\pot}{{\rm pot}}
    \let\TeXchi\chi
\newbox\chibox
\chibox \hbox{\raise\dp0 \box 0 }
\def\chi{\copy\chibox}
\title[Stochastic homogenization of maximal monotone relations]{Stochastic homogenization of maximal monotone relations and applications}
\author[L.\,Lussardi]
{Luca Lussardi}
\address[Luca Lussardi]{Dipartimento di Scienze Matematiche ``G.L.\,Lagrange'', Politecnico di Torino, c.so Duca degli Abruzzi 24, I-10129 Torino, Italy}
\email[]{luca.lussardi@polito.it}
\urladdr{http://www.dmf.unicatt.it/~lussardi/}
\author[S.\,Marini]
{Stefano Marini}
\address[Stefano Marini]{Dipartimento di Matematica e Fisica ``N.\,Tartaglia'', Universit\`a Cattolica del Sacro Cuore, via dei Musei 41, I-25121 Brescia, Italy}
\email[]{stefano.marini02@icatt.it}
\author[M.\,Veneroni]
{Marco Veneroni}
\address[Marco Veneroni]{Dipartimento di Matematica ``F.\,Casorati", Universit\`a degli Studi di Pavia, via Ferrata 5, I-27100 Pavia, Italy}
\email[]{marco.veneroni@unipv.it}
\urladdr{http://www-dimat.unipv.it/~veneroni}
\begin{document}

\baselineskip3.4ex

\vspace{0.5cm}
\begin{abstract}
We study the homogenization of a stationary random maximal monotone operator on a probability space equipped with an ergodic dynamical system. The proof relies on Fitzpatrick's variational formulation of monotone relations, on Visintin's scale integration/disintegration theory and on Tartar-Murat's compensated compactness. We provide applications to systems of PDEs with random coefficients arising in electromagnetism and in nonlinear elasticity.

\vskip .3truecm

\noindent Keywords: Stochastic homogenization; random media; Fitzpatrick representation; maximal monotonicity; Ohm-Hall conduction; nonlinear elasticity. \vskip.1truecm 
\noindent 2010 Mathematics Subject Classification: 35B27, 47H05, 49J40 (74B20, 74Q15, 78M40)
\end{abstract}



%


\date{\today}

\maketitle


\section{Introduction}


Stochastic homogenization is a broadly studied subject, starting from the seminal papers by Kozlov \cite{K} and Papanicolaou-Varadhan \cite{PV}, who studied boundary value problems for second order linear PDEs.  We prove here an abstract homogenization result for the graph of a random maximal monotone operator 
\[ 
	v(x,\o) \in \alpha_\e(x,\o,u(x,\o)),
\]
where $x \in \R^n$ and $\omega$ is a parameter in a probability space $\O$. In the spirit of \cite[Chapter 7]{JikovKozlovOleinik94}, the random operator $\alpha_\e$ is obtained from a stationary operator $\alpha$ via an ergodic dynamical system $T_x:\Omega \to \Omega$
\begin{equation}
\label{eq:alphaintro}
		\alpha_\e(x,\o,\cdot):=  \alpha\left(T_{x/\e}\o,\cdot\right).
\end{equation}
The aim of this paper is to extend existing results where $\alpha$ is the subdifferential of a convex function \cite{Veneroni11, SchweizerHysPM} to general maximal monotone operators and to provide a simple proof based on Tartar's oscillating test function method. The crucial ingredient in our analysis is the scale integration/disintegration theory introduced by Visintin \cite{Visintin09}. Moreover, relying on Fitzpatrick's variational formulation of monotone graphs \cite{Fitzpatrick}, which perfectly suits the scale integration/disintegration setting \cite{VisintinAsAn2013}, in the proof we can directly exploit the maximal monotonicity, without turning to (stochastic) two-scale convergence \cite{BourgeatMikelicWright94, SangoWoukeng, HeidaNesenenko}, $\Gamma$-convergence \cite{DalmasoModica86b,DalmasoModica86a}, G-convergence \cite{Pankov1991,Pankov1997}, nor epigraph convergence \cite{MessaoudiMichaille91, MessaoudiMichaille94}. An advantage of our approach is that we don't need to assume an additional compact metric space structure on the probability space $\O$. Moreover, the effective relation is obtained directly, i.e.\,without an intermediate two-scale problem, which often needs to be studied separately. We also obtain the existence of the oscillating test functions as a byproduct of scale disintegration, without having to study the auxiliary problem (see, e.g., \cite[Section 3.2]{Pankov1997}. 

The outline of the proof is the following: Let  $X$ be a separable and reflexive Banach space, with dual $X'$, let $A_n:X \to X'$ be a sequence of monotone operators, and let $(x_n,y_n)\in X \times X'$ be a sequence of points on the graphs of $A_n$, i.e., such that $y_n=A_n x_n$ for all $n \in \N$. Assuming that $(x_n,y_n) \weak (x,y)$ in $X \times X'$ and that we already know the limit maximal monotone operator $A:X \to X'$, a classical question of functional analysis is: 

\centerline{\emph{Under which assumptions can we conclude that $y=Ax$?}} 
A classical answer (see, e.g., \cite{Brezis73}) is: If we can produce an auxiliary sequence of points on the graph of $A_n$, and we know that they converge to a point on the graph of $A$, that is, if there exist
\begin{equation}
\label{eq:schemeaux}
	(\xi_n,\eta_n)\in X \times X'\quad \text{such that}\quad \eta_n = A_n \xi_n,\quad (\xi_n,\eta_n) \weak (\xi,\eta), \quad \text{and}\quad \eta = A \xi,
\end{equation}
then, denoting by $\langle y,x \rangle$ the duality between $x\in X$ and $y\in X'$, by monotonicity of $A_n$
\[
	\langle y_n - \eta_n,x_n-\xi_n\rangle \geq 0.
\]
In order to pass to the limit as $n \to \infty$, since the duality of weak converging sequences in general does not converge to the duality of the limit, we need the additional hypothesis
\begin{equation}
\label{eq:schemecomp}
	\limsup_{n \to \infty}\, \langle g_n ,f_n\rangle \leq\langle g,f \rangle \qquad \forall\, (f_n,g_n)\weak (f,g)\ \text{in }X \times X',
\end{equation}
which, together with the weak convergence of $(x_n,y_n)$ and $(\xi_n,\eta_n)$, yields
\[
	\langle y - \eta,x-\xi\rangle \geq 0.
\]
By \emph{maximal monotonicity} of $A$, if the last inequality is satisfied for all $(\xi,\eta)$ such that $\eta=A\xi$, then we can conclude that  $(x,y)$ is a point of the graph of $A$, i.e., $y=Ax$. Summarizing, this procedure is based on
\begin{enumerate}
	\item Existence and weak compactness of solutions $(x_n,y_n)$ such that $y_n=A_n x_n$ and $(x_n,y_n) \weak (x,y)$;
	\item A condition for the convergence of the duality product \eqref{eq:schemecomp};
	\item Existence of a recovery sequence \eqref{eq:schemeaux} for all points in the limit graph.
\end{enumerate}
The first step depends on the well-posedness of the application; the second step is ensured, e.g., by compensated compactness (in the sense of Murat-Tartar \cite{Murat78,Tartar77}), and, like the first one, it depends on the character of the differential operators that appear in the application, rather than on the homogenization procedure. In the present paper we focus on the third step: in the context of stochastic homogenization, we prove that the scale integration/disintegration idea introduced by Visintin \cite{Visintin09}, combined with Birkhoff's ergodic theorem (Theorem \ref{B}) yields the desired recovery sequence. We obtain an explicit formula for the limit operator $A$ through the scale integration/disintegration procedure with Fitzpatrick's variational formulation. With the notation introduced in \eqref{eq:alphaintro}, the outline of this procedure is the following:
\[
	\alpha\ \stackrel{a)}{\longrightarrow}\ f\ \stackrel{b)}{\longrightarrow}\ f_0\ \stackrel{c)}{\longrightarrow}\ \alpha_0,
\]  
where a) the random operator $\alpha(\o)$ is represented through a variational inequality involving Fitzpatrick's representation $f(\o)$; b) the representation is ``scale integrated" to a $\o$-independent effective $f_0$; c) a maximal monotone operator $\alpha_0$ is associated to $f_0$. In Theorem \ref{newmain} we prove that $\alpha_0$ is the correct homogenization of $\alpha_\e$.   

In Section \ref{ssec:mmo} we review the properties of maximal monotone operators and their variational formulation due to Fitzpatrick. In Section \ref{ssec:stochan} we recall the basis of ergodic theory that we need in order to state our first main tool: Birkhoff's Ergodic Theorem. Section \ref{sec:stoch} is devoted to the translation to the stochastic setting of Visintin's scale integration-disintegration theory, which paves the way to our main result, Theorem \ref{newmain}. The applications we provide in the last section are: Ohmic electric conduction with Hall effect (Section \ref{sec:ohm}), and nonlinear elasticity, (Section \ref{ssec:nle}).

\section{Notation and preliminaries}

We use the notation $a\cdot b$ for the standard scalar product for vectors in $\R^n$. The arrows $\weak$ and $\weakst$ denote weak and   weak$^*$ convergence, respectively. As usual, $\mathscr D(D)$ stands for the space of $C^\infty$-functions with compact support in $D\subset \R^n$; its dual is denoted by $\mathscr D'(D)$.

\subsection{Maximal monotone operators}  
\label{ssec:mmo}

In this section we summarize the variational representation of maximal monotone operators introduced in \cite{Fitzpatrick}. Further details and proofs of the statements can be found, e.g., in \cite{VisintinCalcVar2013}. Let $B$ be a reflexive, separable and real Banach space; we denote by $B'$ its dual,  by $\mathcal{P}(B')$ the power set of $B'$, and by $\langle y,x \rangle$ the duality between $x\in B$ and $y \in B'$. Let $\alpha \colon B \rightarrow \mathcal{P}(B')$ be a set-valued operator and let 
\[
\mathcal{G}_\alpha := \{ (x,y) \in B\times B':y\in\alpha(x)\}
\]
be its graph. (We make use of the two equivalent notations $y \in \alpha(x)$ or $(x,y)\in \mathcal{G}_\alpha$.) The operator $\alpha$ is said to be {\it monotone} if 
\begin{equation}
\label{mon}
	(x,y) \in \mathcal{G}_\alpha\quad \Rightarrow\quad \langle y-y_0, x-x_0\rangle \ge 0, \quad \forall (x_0,y_0)\in \mathcal{G}_\alpha
\end{equation}
and {\it strictly monotone} if there is $\theta>0$ such that
\begin{equation}
\label{mon-strict}
	(x,y) \in \mathcal{G}_\alpha \quad\Rightarrow\quad \langle y-y_0, x-x_0\rangle \ge \theta \|x-x_0\|^2, 
		\quad  \forall (x_0,y_0)\in \mathcal{G}_\alpha.
\end{equation}
We denote by $\alpha^{-1}$ the inverse operator in the sense of graphs, that is 
\[
	x \in \alpha^{-1}(y)\quad \Leftrightarrow\quad y\in \alpha(x).
\]
The monotone operator $\alpha$ is said to be {\it maximal monotone} if the reverse implication of \eqref{mon} is fulfilled, namely if 
\[
	\langle y-y_0, x-x_0\rangle \ge 0\quad  \forall (x_0,y_0)\in \mathcal{G}_\alpha\quad  \Rightarrow\quad  (x,y) \in \mathcal{G}_\alpha.
\]
An operator $\alpha$ is maximal monotone if and only if $\alpha^{-1}$ is maximal monotone. 
For any operator $\alpha \colon B \rightarrow \mathcal{P}(B')$, which is not identically $\emptyset$, we introduce the {\it Fitzpatrick function of $\alpha$} as the function $f_\alpha \colon B \times B' \to \R$ defined by
\begin{align*}
	f_{\alpha}(x,y):&= \langle y, x \rangle + \sup\{\langle y-y_0,x_0-x \rangle :(x_0,y_0)\in\mathcal{G}_\alpha\}\\
			&=\sup\left\{\langle y,x_0 \rangle + \langle y_0,x \rangle - \langle y_0,x_0 \rangle   :(x_0,y_0)\in\mathcal{G}_\alpha\right\}.
\end{align*}
As a supremum of a family of linear functions, the Fitzpatrick function $f_\alpha$ is convex and lower semicontinuous. Moreover (see \cite{Fitzpatrick}) 
\begin{lemma}
\label{lemma:fp}
An operator $\alpha$ is monotone if and only if
\[
	(x,y) \in \mathcal{G}_\alpha\quad \Rightarrow\quad f_{\alpha}(x,y)=\langle y, x \rangle,
\]
while $\alpha$ is maximal monotone if and only if
\[
	\left\{
		\begin{array}{ll}
			f_{\alpha}(x,y) \ge \langle y, x \rangle\ \quad \forall (x,y)\in B\times B'\\
			f_{\alpha}(x,y)=\langle y, x \rangle\quad \iff (x,y) \in \mathcal{G}_\alpha.
		\end{array}
	\right.
\]
\end{lemma}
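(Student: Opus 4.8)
The plan is to reduce both equivalences to controlling the sign of the single quantity
\[
\delta(x,y) := f_\alpha(x,y) - \langle y, x \rangle = \sup\{\langle y-y_0, x_0-x\rangle : (x_0,y_0)\in\mathcal{G}_\alpha\},
\]
and to exploit the elementary sign flip $\langle y-y_0, x_0-x\rangle = -\langle y-y_0, x-x_0\rangle$, which turns each term of the supremum into the negative of the monotonicity bracket appearing in \eqref{mon}. With this identity in hand, the two statements become essentially bookkeeping around the definition of $f_\alpha$.

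For the monotone characterization I would argue as follows. If $\alpha$ is monotone and $(x,y)\in\mathcal{G}_\alpha$, then \eqref{mon} forces every term $\langle y-y_0, x_0-x\rangle \le 0$, while the diagonal choice $(x_0,y_0)=(x,y)$ attains the value $0$; hence $\delta(x,y)=0$, i.e. $f_\alpha(x,y)=\langle y,x\rangle$. Conversely, if equality holds at every graph point, then $\delta(x,y)=0$ says precisely $\sup_{(x_0,y_0)\in\mathcal{G}_\alpha}\langle y-y_0, x_0-x\rangle \le 0$, and reading off a single term of this supremum recovers \eqref{mon}.

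For the maximal monotone characterization, the forward direction combines the previous step with the contrapositive of the maximality axiom. Since maximal monotone implies monotone, equality $f_\alpha=\langle y,x\rangle$ already holds on $\mathcal{G}_\alpha$. If instead $(x,y)\notin\mathcal{G}_\alpha$, maximality furnishes a graph point $(x_0,y_0)$ with $\langle y-y_0, x-x_0\rangle < 0$, so the corresponding term of the supremum is strictly positive and $\delta(x,y)>0$. This single dichotomy delivers at once the global bound $f_\alpha \ge \langle y,x\rangle$ and the equivalence $f_\alpha(x,y)=\langle y,x\rangle \iff (x,y)\in\mathcal{G}_\alpha$. For the converse, the assumed equivalence together with the backward monotone characterization already yields monotonicity; to obtain maximality I would take $(x,y)$ satisfying $\langle y-y_0, x-x_0\rangle \ge 0$ for all graph points, rewrite this as $\delta(x,y)\le 0$, i.e. $f_\alpha(x,y)\le\langle y,x\rangle$, and combine it with the assumed global bound $f_\alpha \ge \langle y,x\rangle$ to force equality; the assumed equivalence then places $(x,y)$ on $\mathcal{G}_\alpha$.

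The proof carries no serious analytic obstacle: reflexivity and separability of $B$ play no role here, the statement being purely algebraic. The only point that genuinely uses \emph{maximality} rather than plain monotonicity is the production of a separating graph point $(x_0,y_0)$ off the graph, i.e. the strict inequality $\delta(x,y)>0$; everything else is definitional. Accordingly, the main care required is logical rather than computational, namely keeping straight the direction of the inequality under the exchange $x_0-x \leftrightarrow x-x_0$ and applying the maximality axiom in its contrapositive form.
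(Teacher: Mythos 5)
Your proof is correct and complete: reducing everything to the sign of $\delta(x,y)=f_\alpha(x,y)-\langle y,x\rangle=\sup\{\langle y-y_0,x_0-x\rangle\}$ and using the diagonal term for equality on the graph, the contrapositive of maximality for strict inequality off the graph, and the combination of $\delta\le 0$ with the global bound for the converse, is exactly the standard argument. The paper itself does not prove this lemma (it is quoted from Fitzpatrick's original article), so there is no in-paper proof to compare against; the only implicit hypothesis you should flag is that $\mathcal{G}_\alpha\neq\emptyset$, which the paper assumes when defining $f_\alpha$ and which guarantees the supremum defining $\delta$ is not $-\infty$.
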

In the case $B=B'=\R$, it is easy to compute some simple examples of Fitzpatrick function of a monotone operator:

1. Let $\alpha(x):=ax+b$, with $a>0,b\in \R$. A straightforward computation shows that 
\[
	f_\alpha(x,y) = \frac{(y-b+ax)^2}{4a} +bx.
\]

2. Let 
\[
	\alpha(x) = \left\{
		\begin{array}{cl}
			1 & \text{if }x > 0, \\
			{[0,1]} & \text{if }x=0, \\
			-1 & \text{if }x<0.
		\end{array}
		\right.
\]			
Then
\[
	f_\alpha(x,y)=\left\{
		\begin{array}{cl}
			|x| & \text{if } |y| \leq 1, \\
			+\infty & \text{if }|y|>1.
		\end{array}
		\right.
\]
and in both cases $f_\alpha$ coincides with $g(x,y)=xy$ exactly on the graph of $\alpha$. 

We define $\mathcal{F}=\mathcal{F}(B)$ to be the class of all proper, convex and lower semicontinuous functions $f \colon B\times B'\rightarrow \mathbb{R}\cup \{+\infty \}$ such that
\[
	f(x,y)\ge \langle y, x \rangle \quad \forall (x,y)\in B\times B'.
\]
We call $\mathcal F(B)$ the class of \emph{representative functions}. The above discussion shows that given a monotone operator $\alpha$, one can construct its representative function in $\mathcal F(B)$, and viceversa, given a function $f\in \mathcal F(B)$, we define the operator represented by $f$, which we denote $\alpha_f$, by: 
\begin{equation}
\label{def:graph}
	(x,y) \in \mathcal G_{\alpha_f} \Leftrightarrow f(x,y)=\langle y,x \rangle.
\end{equation}	
 A crucial point is whether $\alpha_f$ is monotone (or maximal monotone, see also \cite[Theorem 2.3]{VisintinAsAn2013}). 
\begin{lemma}
\label{lemma:antifp}
	Let $f\in \mathcal F(B)$, then 
	\begin{itemize}
		\item[(i)] the operator $\alpha_f$ defined by \eqref{def:graph} is monotone;
		\item[(ii)] the class of maximal monotone operators is strictly contained in the class of operators representable by functions in $\mathcal F(B)$.
	\end{itemize}	
\end{lemma}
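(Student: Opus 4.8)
The plan is to treat the two parts separately; each will follow quickly from the defining inequality $f\ge\langle\cdot,\cdot\rangle$ together with the convexity of $f$, while the only genuinely constructive work is the strictness in (ii).

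For (i) I would argue directly from convexity at a midpoint. Take two graph points $(x_1,y_1),(x_2,y_2)\in\mathcal G_{\alpha_f}$, so that $f(x_i,y_i)=\langle y_i,x_i\rangle$ by \eqref{def:graph}. Since $f$ is convex and $f\ge\langle\cdot,\cdot\rangle$ pointwise,
\[
\Big\langle \tfrac{y_1+y_2}{2},\tfrac{x_1+x_2}{2}\Big\rangle
\le f\Big(\tfrac{x_1+x_2}{2},\tfrac{y_1+y_2}{2}\Big)
\le \tfrac12 f(x_1,y_1)+\tfrac12 f(x_2,y_2)
=\tfrac12\langle y_1,x_1\rangle+\tfrac12\langle y_2,x_2\rangle.
\]
Expanding the bilinear pairing on the left by linearity and rearranging leaves exactly $\langle y_1-y_2,x_1-x_2\rangle\ge 0$, which is the monotonicity \eqref{mon} of $\alpha_f$. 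This step is essentially forced and I expect no difficulty.

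For the containment asserted in (ii) I would invoke Lemma \ref{lemma:fp}. Given a maximal monotone $\alpha$ (not identically $\emptyset$, so its Fitzpatrick function $f_\alpha$ is defined; it is convex and lower semicontinuous as a supremum of affine functions, and proper because it is finite at any graph point), the second item of Lemma \ref{lemma:fp} says precisely that $f_\alpha\in\mathcal F(B)$ and that its contact set $\{f_\alpha=\langle\cdot,\cdot\rangle\}$ coincides with $\mathcal G_\alpha$. Comparing with \eqref{def:graph}, this means $\alpha_{f_\alpha}=\alpha$, so $\alpha$ is representable, and hence every maximal monotone operator lies in the representable class.

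It then remains to show the inclusion is strict, i.e.\ to exhibit $f\in\mathcal F(B)$ whose operator $\alpha_f$ is monotone (automatic by (i)) but not maximal. The cleanest route is a single explicit example in $B=B'=\R$: set $f(x,y)=\tfrac12 y^2$ if $x=0$ and $f(x,y)=+\infty$ otherwise. This $f$ is proper, convex and lower semicontinuous, and satisfies $f(x,y)\ge xy$ (trivially off $\{x=0\}$, and because $\tfrac12 y^2\ge 0$ on it), so $f\in\mathcal F(B)$; its contact set is the single point $\{(0,0)\}$, whence $\mathcal G_{\alpha_f}=\{(0,0)\}$, a monotone graph that can be properly enlarged (e.g.\ by adjoining $(1,1)$) and is therefore \emph{not} maximal. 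The main obstacle is not conceptual but verificational: one must check that the chosen $f$ genuinely belongs to $\mathcal F(B)$ and that its contact set is exactly the intended non-maximal graph; everything else is immediate.
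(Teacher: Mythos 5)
Your proof is correct. Part (i) is, up to presentation, the paper's own argument: the authors phrase it as a contradiction (assuming $\langle y_2-y_1,x_2-x_1\rangle<0$ and deriving a failure of midpoint convexity for $f$), whereas you run the same midpoint computation forwards; the two are interchangeable. For part (ii), the representability of maximal monotone operators via Lemma \ref{lemma:fp} is also identical. Where you genuinely diverge is the strictness of the inclusion. The paper starts from \emph{any} representative $f$ of a maximal monotone operator, picks a contact point $(x_0,y_0)$ and a constant $c>f(x_0,y_0)$, and passes to $h=\max\{c,f\}\in\mathcal F(B)$, which destroys that contact point, so $\mathcal G_{\alpha_h}\subsetneq\mathcal G_{\alpha_f}$ and $\alpha_h$ is not maximal. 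You instead exhibit a single explicit $f$ (namely $f(0,y)=\tfrac12 y^2$, $f=+\infty$ off $\{x=0\}$) whose contact set is the non-maximal singleton $\{(0,0)\}$; the verifications you list (properness, convexity of a function that is $+\infty$ off the convex set $\{0\}\times\R$, closed epigraph, $f\ge xy$, contact set $=\{(0,0)\}$) all check out. Your route is more concrete and self-contained; the paper's truncation trick is slightly more informative, since it shows that non-maximal representable operators arise arbitrarily close to every maximal one, and it works verbatim in any $B$. On that last point, note that your example as written lives in $B=B'=\R$, while the lemma is stated for a general reflexive separable $B$; you should either remark that the same construction with $f(0,y)=\tfrac12\|y\|_{B'}^2$ works in general, or adopt the paper's $\max\{c,f\}$ device, which is dimension-free. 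This is a presentational fix, not a gap.
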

\begin{proof}
(i) If $\mathcal G_{\alpha_f}$ is empty or reduced to a single element, then the statement is trivially satisfied. Let $x_1,x_2 \in B$, $y_i \in \alpha_f(x_i)$, and assume, by contradiction, that $\langle y_2 - y_1, x_2 -x_1 \rangle <0.$ Define $P_i:=(x_i,y_i)\in B \times B'$ and $g(x,y):= \langle y,x \rangle$. We compute
\begin{align*}
	g\left( \frac{P_1+P_2}{2}\right) - \frac{g(P_1) +g(P_2)}{2} &= 
		\frac14 \big( \langle y_1 +y_2,x_1+x_2\rangle \big) - \frac12 \big(  \langle y_1 ,x_1\rangle + \langle y_2,x_2\rangle \big) \\
		&= \frac14 \big( \langle y_1 ,x_2\rangle + \langle y_2 ,x_1\rangle - \langle y_1 ,x_1\rangle -\langle y_2 ,x_2\rangle\big) \\
		& = -\frac14 \big( \langle y_2 -y_1 ,x_2 - x_1\rangle \big) >0.
\end{align*}
Since $f \geq g$ and $f(x_i,y_i)=g(x_i,y_i)$, the last inequality implies
\[
		f\left( \frac{P_1+P_2}{2}\right) > \frac{f(P_1) +f(P_2)}{2},
\]
which contradicts the convexity of $f$. 

(ii) Maximal monotone operators are representable by Lemma \ref{lemma:fp}. To see that the inclusion is strict, assume that $\alpha_f$ is maximal monotone. Let $(x_0,y_0)\in \mathcal G_{\alpha_f}$ and fix $c\in \R$ such that $c > f(x_0,y_0)$. The function
\[
	h(x,y) = \max\{c,f(x,y)\}
\]
clearly belongs to $\mathcal F(B)$, but $\mathcal G_{\alpha_h}$ is strictly contained in $\mathcal G_{\alpha_f}$, since 
\[
	h(x_0,y_0) \geq c >f(x_0,y_0)=\langle y_0,x_0 \rangle,
\]	 
and thus $\alpha_h$ is not maximal.
\end{proof}

\begin{oss} When $\alpha = \partial \varphi$ for some proper, convex, lower semicontinuous function $\varphi:B \to \R \cup \{+\infty\}$, the classical Fenchel equality yields 
\[
	\varphi(x) + \varphi^*(y) \geq \langle y,x \rangle\quad \forall\,(x,y)\in B \times B',
\]	
\[	
	y \in \alpha(x)\quad \Leftrightarrow \quad \varphi(x) + \varphi^*(y) = \langle y,x \rangle.
\]
Thus, Fitzpatrick's representative function $f_\alpha$ generalizes $\varphi + \varphi^*$ to maximal monotone operators which are not subdifferentials. Remark that, even if $\alpha=\partial \varphi$, in general $f_\alpha \neq \varphi + \varphi^*$: for example, let $\alpha(x):=x$ in $\R$, then 
\[
	f_\alpha(x,y) = \frac{(x+y)^2}{4} \neq \frac{x^2}{2}+\frac{y^2}{2} = \varphi(x)+\varphi^*(y).
\]
\end{oss}
\bigskip

We need to introduce also parameter-dependent operators. 
For any measurable space $X$ we say that a set-valued mapping $g\colon X \rightarrow \mathcal P(B')$ is measurable if for any open set $R\subseteq B'$ the set 
\[
	g^{-1}(R) := \{ x \in X : g(x) \cap R \neq \emptyset \}
\]
is measurable. 

Let $\mathcal B(B)$ be the $\sigma$-algebra of the Borel subsets of the separable and reflexive Banach space $B$, let $(\Omega,\mathcal{A},\mu)$ be a probability space equipped with the $\sigma$-algebra $\mathcal{A}$ and the probability measure $\mu$. We define a random (maximal) monotone operator as $\alpha \colon B \times \Omega \rightarrow \mathcal{P}(B')$ such that
\begin{eqnarray}
	&\alpha \textrm{ is $\mathcal B(B)\otimes \mathcal{A}$-measurable}, \label{1} \\
	&\alpha(x,\omega) \textrm{ is closed for any $x \in B$ and for a.e. }\omega \in \Omega, \label{2} \\
	&\alpha(\cdot,\omega) \textrm{  is (maximal) monotone  for  a.e. }\omega \in \Omega.\label{3}
\end{eqnarray}
If $\alpha$ fulfills \eqref{1} and \eqref{2} then for any $\mathcal{A}$-measurable mapping $v:\Omega \rightarrow B$, the multivalued mapping $\omega \mapsto \alpha(v(\omega),\omega)$ is closed-valued and measurable. We denote by $\mathcal{F}(\Omega;B)$ the set of all {\it measurable representative functions} $ f: B \times B' \times \Omega \to \R \cup \{+\infty\}$ such that 
\begin{itemize}
	\item[\rm(a)] $f\colon B\times B' \times \Omega \rightarrow \mathbb{R} \cup \{+\infty\}$ 
		is $\mathcal{B}(B \times B') \otimes \mathcal{A}$ -measurable,
	\item[\rm(b)] $f(\cdot,\cdot,\omega)$ is convex and  lower semicontinuous for a.e.\,$\omega \in \Omega$,
	\item[\rm(c)] $f(x,y,\omega)\ge \langle y,x \rangle$ for all $(x,y) \in B\times B'$ and for  a.e.\,$\omega \in \Omega$.
\end{itemize}
As above, $f \in \mathcal{F}(\Omega;B)$ represents the operator $\alpha=\alpha_f(\cdot,\omega)$ in the following sense:
\begin{equation}
\label{eq:represent}
	y \in \alpha(x,\omega)\ \Leftrightarrow \ f(x,y,\omega)=  \langle y,x\rangle \qquad \forall (x,y) \in B\times B',\, \text{for a.e. }\omega \in \Omega.
\end{equation}
Precisely, any measurable representative function $f\colon B \times B' \times\Omega \rightarrow \mathbb{R}\cup \{+\infty \}$ represents a closed-valued, measurable, monotone operator  $\alpha \colon B \times \Omega \rightarrow\mathcal{P}(B')$, while any closed-valued, measurable, maximal monotone operator $\alpha$ can be represented by a measurable representative function $f$, for instance, by its Fitzpatrick function \cite[Proposition 3.2]{VisintinAsAn2013}.

\subsection{Stochastic analysis}
\label{ssec:stochan}

In this subsection we review the basic notions and results of stochastic analysis that we need in Section \ref{sec:stoch}. For more details see \cite[Chapter 7]{JikovKozlovOleinik94}. Let $(\Omega, \mathcal{A}, \mu)$ be a probability space, where $\mathcal{A}$ is a $\sigma$-algebra of subsets of $\Omega$ and $\mu$ is a probability measure on $\Omega$. Let $n \in \N$ with $n \geq 1$. A {\it $n$-dimensional dynamical system} $T$ on $\Omega$ is a family of mappings $T_x \colon \Omega \to \Omega$, with $x \in \R^n$, such that
\begin{itemize}
\item[(a)] $T_0$ is the identity and $T_{x+y}=T_xT_y$ for any $x,y \in \R^n$;
\item[(b)] for every $x \in \R^n$ and every set $E \in \mathcal{A}$ we have $T_xE \in \mathcal{A}$ and
\begin{equation}\label{invariancef}\mu(T_xE)=\mu(E)\end{equation}
\item[(c)] for any measurable function $f \colon \Omega \to \R^m$, the function $\tilde f \colon \R^n \times \Omega \to \R^m$ given by
\[
\tilde f(x,\omega)=f(T_x\omega)
\]
is measurable.
\end{itemize}
Given a $n$-dimensional dynamical system $T$ on $\Omega$, a measurable function
$f$ defined on $\Omega$ is said to be {\it invariant} if $f(T_x\omega)=f(\omega)$ a.e.\,in $\Omega$, for each $x \in \R^n$. A dynamical system is said to be {\it ergodic} if the only invariant functions are the constants. The {\it expected value} of a random variable $f \colon \Omega \to \R^n$ is defined as
\[
{\mathbb E}(f):=\int_\Omega f\,d\mu.
\]

In the context of stochastic homogenization, it is useful to provide an orthogonal decomposition of $L^2(\O)$ into functions, the realizations of which are vortex-free and divergence-free, in the sense of distributions (see, e.g., \cite[Section 7]{JikovKozlovOleinik94}). For $p \in [1,+\infty[$, Peter-Weyl's decomposition theorem \cite{PeterWeyl27} can be generalized to a relation of orthogonality between subspaces of $L^p(\O)$ and $L^{p'}(\O)$.
 Let $v\in L^p_{\rm loc}(\R^n;\R^n)$. We say that $v$ is {\it potential} (or {\it vortex-free}) in $\mathbb{R}^{n}$ if
\[
\int \bigg( v^{i}\frac{\partial\varphi}{\partial x_{j}}-v^{j}\frac{\partial \varphi}{\partial x_{i}}\bigg)\,dx=0, \ \ \ \ \forall i,j=1,\dots,n,\, \forall \varphi \in \mathscr{D}(\mathbb{R}^{n})
\]
and we say that $v$ is {\it solenoidal} (or {\it divergence-free}) in $\mathbb{R}^{n}$ if
\[
	\sum_{i=1}^{n}\int v^{i}\frac{\partial\varphi}{\partial x_{i}}\,dx=0, \ \ \ \forall \varphi \in \mathscr{D}(\mathbb{R}^{n}).
\]
Next we consider a vector field on $(\Omega,\mathcal{A},\mu)$. We say that $f\in L^{p}(\Omega;\R^n)$ is {\it potential} if $\mu$-almost all its realizations $x\mapsto f(T_{x}\omega)$ are potential. We denote by $L_{\rm pot}^{p}(\Omega;\R^n)$ the space of all potential $f\in L^{p}(\Omega;\R^n)$. In the same way, $f\in L^{p}(\Omega;\R^n)$ is said to be {\it solenoidal} if $\mu$-almost all its realizations $x\mapsto f(T_{x}\omega)$ are solenoidal and we denote by $L_{\rm sol}^{p}(\Omega;\R^n)$ the space of all solenoidal $f\in L^{p}(\Omega;\R^n)$. In the following Lemma we collect the main properties of potential and solenoidal $L^p$ spaces (see \cite[Section 15]{JikovKozlovOleinik94}).
\begin{lemma}
\label{lemma:solpot} 
Define the spaces
\begin{align*}
	\mathcal{V}^{p}_{\rm pot}(\Omega;\R^n)&:=\{ f\in L_{\rm pot}^{p}(\Omega;\R^n) : \mathbb E(f)=0\}, \\
	\mathcal{V}^{p}_{\rm sol}(\Omega;\R^n)&:=\{ f\in L_{\rm sol}^{p}(\Omega;\R^n) : \mathbb E(f)=0\}.
\end{align*}
The spaces $L_{\rm pot}^{p}(\Omega;\R^n), L_{\rm sol}^{p}(\Omega;\R^n), \mathcal{V}^{p}_{\rm pot}(\Omega;\R^n), \mathcal{V}^{p}_{\rm sol}(\Omega;\R^n)$ are closed subspaces of $L^{p}(\Omega;\R^n)$. If $u \in L_{\rm sol}^p(\Omega;\R^n)$ and $v \in L_{\rm pot}^q(\Omega;\R^n)$ with $1/p+1/q=1$, then 
\begin{equation}
\label{ort}
	\mathbb E(u \cdot v)=\mathbb E(u) \cdot \mathbb E(v)
\end{equation}
and the relations
\[
	(\mathcal{V}^{p}_{\rm sol}(\Omega;\R^n))^\perp = \mathcal{V}^{q}_{\rm pot}(\Omega;\R^n) \oplus \R^n,\qquad (\mathcal{V}^{p}_{\rm pot}(\Omega;\R^n))^\perp = \mathcal{V}^{q}_{\rm sol}(\Omega;\R^n) \oplus \R^n
\]
hold in the sense of duality between the spaces $L^p(\O)$ and $L^q(\Omega)$.	
\end{lemma}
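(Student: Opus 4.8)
The plan is to establish the four assertions in the order closedness, the orthogonality identity \eqref{ort}, and finally the two complement relations, the last of which I expect to be the crux.

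\textbf{Closedness.} Since $\Omega$ is a probability space, $L^p(\Omega)\hookrightarrow L^1(\Omega)$ continuously, so each component of $f\mapsto\mathbb{E}(f)$ is a bounded linear functional on $L^p(\Omega;\R^n)$; hence $\mathcal{V}^p_{\rm pot}$ and $\mathcal{V}^p_{\rm sol}$ are closed as soon as $L^p_{\rm pot}$ and $L^p_{\rm sol}$ are. To see that $L^p_{\rm pot}(\Omega;\R^n)$ is closed, take $f_k\to f$ in $L^p(\Omega;\R^n)$. The defining identities are distributional conditions on the realizations $x\mapsto f_k(T_x\omega)$, and the invariance \eqref{invariancef} together with Fubini gives, for every bounded cube $Q$,
\[
	\int_{Q}\int_\Omega |f_k(T_x\omega)-f(T_x\omega)|^p\,d\mu\,dx=|Q|\,\|f_k-f\|_{L^p(\Omega;\R^n)}^p\longrightarrow 0,
\]
so along a subsequence the realizations converge in $L^p_{\rm loc}(\R^n;\R^n)$ for $\mu$-a.e.\,$\omega$. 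Testing the vortex-free (resp.\,divergence-free) condition against a fixed $\varphi\in\mathscr{D}(\R^n)$ is continuous under $L^1_{\rm loc}$-convergence, so the limit realization is again potential (resp.\,solenoidal) for a.e.\,$\omega$, i.e.\,$f\in L^p_{\rm pot}$ (resp.\,$f\in L^p_{\rm sol}$).

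\textbf{Orthogonality.} Writing $u=\mathbb{E}(u)+u_0$ and $v=\mathbb{E}(v)+v_0$ with $u_0\in\mathcal{V}^p_{\rm sol}$ and $v_0\in\mathcal{V}^q_{\rm pot}$, the cross terms vanish because $\mathbb{E}(u_0)=\mathbb{E}(v_0)=0$, so \eqref{ort} reduces to the orthogonality $\mathbb{E}(u_0\cdot v_0)=0$. I would prove this by integration by parts on $\Omega$. The maps $U(x)g:=g(T_x\cdot)$ form an $n$-parameter group of isometries of $L^r(\Omega)$ whose generators $D_1,\dots,D_n$ are the stochastic derivatives, and \eqref{invariancef} yields $\mathbb{E}(D_i g)=0$ together with the adjoint rule $\mathbb{E}(\phi\,D_i\psi)=-\mathbb{E}(\psi\,D_i\phi)$. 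For a solenoidal $u$ and a stochastic gradient $v=(D_1\phi,\dots,D_n\phi)$ this gives $\mathbb{E}(u\cdot v)=\sum_i\mathbb{E}(u_i\,D_i\phi)=-\mathbb{E}\big(\phi\sum_i D_i u_i\big)=0$, since the realizations of $u$ are divergence-free. As both sides are continuous in $v$, the identity extends to all of $\mathcal{V}^q_{\rm pot}$ once the stochastic gradients are known to be dense there. An alternative route, avoiding the density step at this stage, is to average the pointwise div-curl quantity $u(T_x\omega)\cdot v(T_x\omega)$ over large cubes and invoke Birkhoff's theorem (Theorem \ref{B}): writing the curl-free realization of $v$ as $\nabla\Phi$ and integrating $u(T_\cdot\omega)\cdot\nabla\Phi$ by parts reduces the cube average to a boundary term of lower order, while Birkhoff identifies the average with $\mathbb{E}(u\cdot v)$.

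\textbf{Complement relations and the main obstacle.} The inclusions $\supseteq$ are immediate: \eqref{ort} gives $\mathbb{E}(u\cdot v_0)=0$ for $u\in\mathcal{V}^p_{\rm sol}$ and $v_0\in\mathcal{V}^q_{\rm pot}$, while $\mathbb{E}(u\cdot c)=\mathbb{E}(u)\cdot c=0$ for every constant $c\in\R^n$, so $\mathcal{V}^q_{\rm pot}\oplus\R^n$ annihilates $\mathcal{V}^p_{\rm sol}$, and symmetrically for the other relation. The reverse inclusions are the heart of the matter and amount to the Weyl-type decomposition $L^2(\Omega;\R^n)=\mathcal{V}^2_{\rm pot}\oplus\mathcal{V}^2_{\rm sol}\oplus\R^n$. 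In the Hilbert case $p=2$ I would show that $(\mathcal{V}^2_{\rm sol}\oplus\R^n)^\perp$ equals the $L^2$-closure of the stochastic gradients and that this closure coincides with $\mathcal{V}^2_{\rm pot}$; the general case $p\neq 2$ is then recovered by a density-and-duality argument, using that $L^p\cap L^2$ is dense and that the pairing in \eqref{ort} identifies the annihilators across the $L^p$–$L^q$ duality. I expect the \emph{density of the stochastic gradients in $\mathcal{V}^p_{\rm pot}$}, i.e.\,the absence of any defect between mean-zero potential fields and limits of gradients, to be the principal obstacle; it is precisely here that the group structure of the dynamical system is used, whereas the remaining steps are soft consequences of the invariance \eqref{invariancef}, Fubini, and Birkhoff's theorem.
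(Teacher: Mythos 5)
First, a point of reference: the paper offers no proof of this lemma at all --- it is quoted from \cite[Section 15]{JikovKozlovOleinik94} (Lemma 15.1 and the Weyl decomposition there), so your proposal can only be measured against that reference. Your overall architecture matches it: the closedness argument (invariance \eqref{invariancef} plus Fubini to transfer $L^p(\Omega)$-convergence to a.e.-$\omega$ $L^p_{\rm loc}$-convergence of realizations along a subsequence, then passing to the limit in the distributional conditions) is correct and complete, as are the reduction of \eqref{ort} to the mean-zero case and the ``$\supseteq$'' halves of the annihilator identities.

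The genuine gap is that the entire analytic content of the lemma --- both \eqref{ort} and the ``$\subseteq$'' halves of the complement relations --- is in your write-up reduced to the density of stochastic gradients in $\mathcal{V}^q_{\rm pot}$ (equivalently, to the Weyl--Peter-Weyl decomposition $L^2(\Omega;\R^n)=\R^n\oplus\mathcal{V}^2_{\rm pot}\oplus\mathcal{V}^2_{\rm sol}$), and you explicitly leave that point unproved. This is not a routine verification: it requires constructing the generators $D_i$ of the translation group (e.g.\ by mollification $f_\delta(\omega)=\int k_\delta(x)f(T_x\omega)\,dx$ along the group), showing they are densely defined and skew-adjoint, and identifying $\mathcal{V}^2_{\rm pot}$ with the closure of their range --- precisely the content of \cite[Section 15]{JikovKozlovOleinik94} and, in spirit, of \cite{PeterWeyl27}. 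Your ``alternative route'' for \eqref{ort} via Birkhoff also has an unjustified step: the boundary term $\int_{\partial Q}\Phi\,(u\cdot n)$ is \emph{not} obviously of lower order, since a priori $\Phi\in W^{1,q}_{\rm loc}$ only; controlling it amounts to proving sublinearity of the corrector, which is again essentially the statement being proved. A way to close \eqref{ort} without either ingredient is to combine Birkhoff's theorem (Theorem \ref{B}, applied to $u\cdot v\in L^1(\Omega)$ and to $u$, $v$ separately) with the div--curl Lemma \ref{lemma:divcurl} applied to the realizations $u(T_{x/\e}\omega)$, $v(T_{x/\e}\omega)$: the two limits of $\frac{1}{|K|}\int_K u(T_{x/\e}\omega)\cdot v(T_{x/\e}\omega)\,dx$ must agree, giving $\mathbb{E}(u\cdot v)=\mathbb{E}(u)\cdot\mathbb{E}(v)$ directly. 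The complement relations, however, genuinely require the Hilbert-space decomposition plus a nontrivial density/duality step to pass from $p=2$ to general $p$, and as written your proposal asserts rather than establishes both.
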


One of the most important results regarding stochastic homogenization is Birkhoff's Ergodic Theorem. We report the statement given in \cite[Theorem 7.2]{JikovKozlovOleinik94}.
\begin{theorem}{\bf(Birkhoff's Ergodic Theorem)}
\label{B}
Let $f \in L^1(\Omega;\R^m)$ and let $T$ be a $n$-dimensional ergodic dynamical system on $\Omega$. Then
\[
	\mathbb E(f)=\lim_{\e \to 0}\frac{1}{|K|}\int_K f\big(T_{x/\e}\omega\big)\,dx 
\]
for $\mu$-a.e. $\omega \in \Omega$, for any $K \subset \R^n$ bounded, measurable, with $|K|>0$.
\end{theorem}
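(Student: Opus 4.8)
The statement is the multidimensional, continuous-parameter form of Birkhoff's theorem, so the plan is to reduce it to the classical pointwise ergodic theorem for commuting measure-preserving maps and then to exploit ergodicity to pin down the limit. After the change of variables $y=x/\e$, the assertion is equivalent to
\[
	\frac{1}{|K/\e|}\int_{K/\e} f(T_y\o)\,dy \longrightarrow \mathbb E(f)\qquad(\e\to 0),
\]
i.e.\ the spatial averages of the realization $y\mapsto f(T_y\o)$ over the expanding dilates $K/\e$ converge to the mean. I would first treat the model cube $K=Q:=[0,1)^n$. Setting $F(\o):=\int_Q f(T_y\o)\,dy$, Fubini together with the invariance \eqref{invariancef} gives $F\in L^1(\O)$ and $\mathbb E(F)=\mathbb E(f)$; the group law $T_{x+y}=T_xT_y$ then yields, for every integer $N$,
\[
	\frac{1}{N^n}\int_{[0,N)^n} f(T_y\o)\,dy=\frac{1}{N^n}\sum_{0\le k_i<N}F(T_k\o),
\]
which is exactly the $\mathbb Z^n$-ergodic average of $F$ under the $n$ commuting measure-preserving transformations $T_{e_1},\dots,T_{e_n}$.

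The analytic core is then the multiparameter pointwise ergodic theorem (Wiener, Dunford--Schwartz) for commuting transformations, which I would invoke to conclude that the right-hand side converges $\mu$-a.e.\ to a limit $\bar f(\o)$. This is the step I expect to be the main obstacle: it rests on a genuinely $n$-dimensional maximal ergodic inequality, which does not follow by naively iterating the one-dimensional maximal theorem and is the hard heart of Birkhoff's theorem. Granting it, the non-integer scales are handled by sandwiching $[0,N)^n\subseteq Q/\e\subseteq[0,N+1)^n$ for $N\le 1/\e<N+1$ and bounding the discrepancy by the maximal function, so that the continuous limit $\bar f(\o)=\lim_{\e\to0}|Q/\e|^{-1}\int_{Q/\e} f(T_y\o)\,dy$ exists a.e.

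The remaining point is the identification $\bar f\equiv\mathbb E(f)$, and here the subtlety is that the limit is a priori only $\mathbb Z^n$-invariant, whereas ergodicity is assumed for the full $\R^n$-action. I would resolve this by showing directly that $\bar f$ is $\R^n$-invariant: replacing $\o$ by $T_z\o$ merely translates the averaging domain from $Q/\e$ to $z+Q/\e$, and since $|(z+Q/\e)\,\triangle\,(Q/\e)|=o(|Q/\e|)$, the contribution of the symmetric difference is controlled by the maximal function and vanishes in the limit, giving $\bar f(T_z\o)=\bar f(\o)$ for a.e.\ $\o$ and each $z$; the joint measurability in $(z,\o)$ from axiom (c) upgrades this to genuine invariance. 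Ergodicity then forces $\bar f$ to be a.e.\ constant, and integrating identifies the constant as $\mathbb E(\bar f)=\mathbb E(F)=\mathbb E(f)$. Finally, a general bounded measurable $K$ with $|K|>0$ is recovered by approximating $\chi_K$ from inside and from outside by finite unions of dyadic cubes, applying the cube case to each and using the maximal inequality once more to absorb the $L^1$-error.
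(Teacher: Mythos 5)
The paper does not prove this statement at all: Theorem \ref{B} is quoted verbatim from \cite[Theorem 7.2]{JikovKozlovOleinik94} and used as a black box, so there is no internal proof to compare yours against. Judged on its own, your outline is the standard route to the continuous multiparameter Birkhoff theorem and is essentially sound: reduce to the unit cube, average $f$ over a fundamental domain to produce $F(\o)=\int_Q f(T_y\o)\,dy\in L^1(\O)$, recognize the cube averages as $\mathbb Z^n$-ergodic sums of $F$ under the commuting maps $T_{e_1},\dots,T_{e_n}$, invoke the multiparameter pointwise ergodic theorem, and then identify the limit. You correctly isolate the two genuinely delicate points, namely the $n$-dimensional maximal inequality (which you reasonably leave to Wiener--Dunford--Schwartz--Tempelman) and the mismatch between $\mathbb Z^n$-invariance of the limit and the assumed ergodicity of the full $\R^n$-action, which you repair by proving $\R^n$-invariance of $\bar f$ directly. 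One step is stated more loosely than it can be executed: the cube maximal function does not by itself control averages over the thin shells $(z+Q/\e)\triangle(Q/\e)$ or $[0,N+1)^n\setminus[0,N)^n$; you should instead write each shell as a difference of two cubes and use the already-established \emph{convergence} (not mere boundedness) of the cube averages of $|f|$, together with the fact that the volume ratios tend to $1$ --- and since those cubes are translated, this has to be organized so as not to be circular (e.g.\ first proving convergence for all cubes anchored at a countable dense set of corners). The same remark applies to the final approximation of a general bounded measurable $K$ by finite unions of dyadic cubes. These are standard repairs, so I would call your proposal a correct sketch of the classical proof rather than a gap, but be aware that the paper itself simply cites the result.
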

\begin{oss}
Birkhoff's theorem implies that $\mu$-almost every realization $\tilde f_\e(x)=f(T_{x/\e}\omega)$ satisfies
\[ 
	\lim_{\e \to 0} \frac{1}{|K|}\int_K \tilde f_\e(x)\, dx = \mathbb{E}(f).
\]	
Since this holds for every measurable bounded set $K \subset \R^n$, it entails in particular that if $f\in L^p(\Omega)$, then
\begin{equation}
\label{birk:weak} 
	\tilde f_\e \weak \mathbb{E}(f) \quad \mbox{weakly in }L^p_{\rm loc}(\R^n;\R^m).
\end{equation}	
\end{oss}

In what follows, the dynamical system $T_x$ is assumed to be ergodic and $K\subset \R^n$ is bounded, measurable and $|K|>0$.

\section{Stochastic homogenization}
\label{sec:stoch}

Let be given a probability space $(\Omega,\mathcal{A},\mu)$ endowed with a $n$-dimensional ergodic dynamical system $T_x \colon \Omega \to \Omega$, $x\in \R^n$. Let $p\in (1,+\infty)$, $q={\frac{p}{p-1}}$ and let $\alpha$ be a random maximal monotone operator, as in \eqref{1}--\eqref{3}. 

\subsection{Stochastic scale integration/disintegration}
\label{ssec:sid}

We translate here Visintin's  scale integration/disintegration \cite{Visintin09, VisintinAsAn2013} to the stochastic homogenization setting.
\begin{oss} While most of this subsection's statements are Visintin's results written in a different notation, some others contain a small, but original contribution. Namely: Lemma \ref{lemmaV} can be found in \cite[Lemma 4.1]{VisintinAsAn2013}, where the assumption of boundedness for $K$ is used to obtain the lower semicontinuity of the inf function. Since we prefer not to impose this condition, we independently proved the lower semicontinuity part, making use of the coercivity of $g$. 
Lemma \ref{lemma:estima} and Proposition \ref{prop-f0} were given for granted in \cite{VisintinAsAn2013}, but we decided to write a proof for sake of clarity.  
Lemma \ref{lemma:problema} and Lemma \ref{lemma:scaleintegration}, are essentially \cite[Theorem 4.3]{VisintinAsAn2013} and \cite[Theorem 4.4]{VisintinAsAn2013}, cast in the framework of stochastic homogenization in the probability space $(\Omega,\mathcal{A},\mu)$, instead of periodic homogenization on the $n$-dimensional torus. Theorem \ref{th:inherit} collects other results of \cite{VisintinAsAn2013}. Lemma \ref{lemma:strictmon} is an original remark.
\end{oss}
Let $f(\cdot,\cdot,\omega) \colon \R^n \times \R^n \to \R \cup \{+\infty\}$ be the Fitzpatrick representation of the operator $\alpha(\cdot,\o)$. We assume the following coercivity condition on $f$: there exist $c>0$ and $k\in L^1(\Omega)$ such that for any $\xi,\eta\in \R^n$, for any $\omega \in \Omega$ it holds 
\begin{equation}\label{coercivity}
	f(\xi,\eta,\omega) \ge c\left(|\xi |^p+|\eta |^q \right)+k(\omega).
\end{equation}
We define the homogenized representation $f_0 \colon \R^n \times \R^n \to \R  \cup \{+\infty\}$ as 
\begin{equation}\label{f0}
	f_0(\xi,\eta):=\inf \bigg\{ \int_\Omega f(\xi+v(\omega),\eta+u(\omega),\omega) \,d\mu :u \in \mathcal V^p_{\rm pot}(\O;\R^n), v\in \mathcal V^q_{\rm sol}(\O;\R^n) \bigg\}.
\end{equation}

\begin{lemma}
\label{lemmaV}
Let $X$ be a reflexive Banach space, let $K$ be a weakly closed subset of a reflexive Banach space. Let the function $g \colon X \times K \to \R \cup \{+\infty\}$ be weakly lower semicontinuous and bounded from below. If $g$ is coercive, e.g.\,in the sense that for all $M>0$ the set
\[
	\{(x,y)\in X \times K : g(x,y) \leq M\} 
\]
is bounded, then the function $h\colon X \to \R \cup \{+\infty\}$ given by 
\[
	h(x):= \inf_{y\in K}g(x,y)
\]
is weakly lower semicontinuous and coercive. Moreover, if $g$ and $K$ are convex then $h$ is convex.
\end{lemma}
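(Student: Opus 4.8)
The plan is to treat the three assertions separately, disposing of coercivity and convexity first since they follow directly from the definition of $h$ as an infimal projection, and then devoting the main effort to weak lower semicontinuity, where the coercivity hypothesis on $g$ does the work that boundedness of $K$ would otherwise do.

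For coercivity of $h$, I would argue straight from the notion of coercivity used above. Fix $M>0$ and take any $x$ with $h(x)\le M$. Since $h(x)=\inf_{y\in K}g(x,y)\le M$, there exists $y\in K$ with $g(x,y)\le M+1$, so that the pair $(x,y)$ lies in the sublevel set $\{(x',y')\in X\times K : g(x',y')\le M+1\}$, which is bounded by hypothesis. In particular $\|x\|$ is controlled by the bound on this set, uniformly in all such $x$, whence $\{x : h(x)\le M\}$ is bounded. For convexity, assuming $g$ and $K$ convex, I would take $x_1,x_2\in X$ and $\lambda\in[0,1]$; if either $h(x_i)=+\infty$ there is nothing to prove, and otherwise I would pick near-minimizers $y_i\in K$ with $g(x_i,y_i)\le h(x_i)+\delta$. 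Since $K$ is convex, $\lambda y_1+(1-\lambda)y_2\in K$, and joint convexity of $g$ gives $h(\lambda x_1+(1-\lambda)x_2)\le g(\lambda x_1+(1-\lambda)x_2,\,\lambda y_1+(1-\lambda)y_2)\le \lambda h(x_1)+(1-\lambda)h(x_2)+\delta$; letting $\delta\to 0$ concludes.

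The core of the argument is weak lower semicontinuity. I would take $x_n\weak x$ in $X$ and aim to show $\liminf_n h(x_n)\ge h(x)$. If the liminf is $+\infty$ there is nothing to prove, so I pass to a subsequence, not relabeled, along which $h(x_n)\to L<+\infty$, and for each $n$ I select $y_n\in K$ with $g(x_n,y_n)\le h(x_n)+1/n$ (legitimate since $g$, hence $h$, is bounded below, so $L>-\infty$). The key step is compactness of $\{y_n\}$: for $n$ large one has $g(x_n,y_n)\le L+1$, so $(x_n,y_n)$ belongs to the bounded sublevel set $\{g\le L+1\}$, which bounds $\{y_n\}$ in the ambient reflexive space. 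Since $x_n\weak x$, the sequence $\{x_n\}$ is bounded as well, and $\{y_n\}$ is bounded; by reflexivity I extract a further subsequence with $y_n\weak y$, and because $K$ is weakly closed, $y\in K$. Then $(x_n,y_n)\weak (x,y)$ in the reflexive product space $X\times(\text{ambient space})$, and weak lower semicontinuity of $g$ yields $g(x,y)\le\liminf_n g(x_n,y_n)=L$. Finally $h(x)=\inf_{y'\in K}g(x,y')\le g(x,y)\le L=\liminf_n h(x_n)$, as required.

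The main obstacle is precisely the extraction of a weakly convergent subsequence of the minimizing sequence $\{y_n\}$: without assuming $K$ bounded one cannot take this for granted, and it is here that coercivity of $g$ is essential, since it confines the near-minimizers to a bounded set. Once this compactness is secured, reflexivity, weak closedness of $K$, and weak lower semicontinuity of $g$ combine in a routine way. This is exactly the point at which the present proof departs from the boundedness-of-$K$ hypothesis used in the reference, replacing it with the coercivity of $g$.
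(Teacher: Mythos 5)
Your proof is correct and follows essentially the same route as the paper: coercivity of $g$ bounds the near-minimizers $y_n$, reflexivity gives a weakly convergent subsequence, weak closedness of $K$ and weak lower semicontinuity of $g$ finish the argument, and the coercivity and convexity of $h$ are read off directly from the sublevel sets and from joint convexity. Your bound $g(x_n,y_n)\le L+1$ for large $n$ is in fact slightly cleaner than the paper's ``$h(x_j)\le 2\ell$'' (which as written only makes sense for $\ell\ge 0$), but this is a cosmetic difference.
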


\begin{proof}
	Let $x_j \rightharpoonup x \in X$, we must show that 
\begin{equation}
\label{eq2}
		\liminf_{j \to +\infty} h(x_j) \geq h(x).
\end{equation}
Let 
\[
	\ell:=\liminf_{j \to +\infty} h(x_j).
\]	 
If $\ell = +\infty$, then \eqref{eq2} is trivially satisfied. On the other hand, since $g$ is bounded from below, then $\ell > -\infty$, and we can assume that $\ell \in \R$. By definition of inferior limit, there exists a subsequence of $\{x_j\}$ (not relabeled), such that $\lim_{j\to \infty}h(x_j)=\ell$. Up to extracting another subsequence, we can also assume that $h(x_j)\leq 2\ell$ for all $j\in \mathbb N$. Let $\e >0$ be fixed, by definition of infimum, for all $j\in \mathbb N$, there exists $y_j \in K$ such that
\begin{equation}
\label{eq3}
	h(x_j) = \inf_{y \in K} g(x_j,y) \geq g(x_j,y_j)-\e.
\end{equation}
Therefore
\[
	g(x_j,y_j) \leq 2\ell +\e\qquad \forall\,j\in \mathbb N.
\]
By the coercivity assumption on $g$, we deduce that $y_j$ is bounded, we can therefore extract a subsequence $\{y_{j_k}\} \subset K$ such that $y_{j_k}\rightharpoonup y$. Since $K$ is weakly closed, then $y \in K$. We can now pass to the inferior limit in \eqref{eq3}, using the lower semicontinuity of $g$
\begin{equation}
\label{eq:4}
	 \liminf_{k \to +\infty} h(x_{j_k})  \geq  \liminf_{k \to +\infty} g(x_{j_k},y_{j_k})-\e \geq g(x,y) -\e \geq h(x)-\e.
\end{equation}
By arbitrariness of $\e>0$, this proves the weak lower semicontinuity of $h$. Assume now that $K$ is convex. Take $\lambda\in [0,1]$, $x_1,x_2\in X$ and  $y_1,y_2 \in K$. By convexity of $g$
\[
	h(\lambda x_1+(1-\lambda)x_2)\le g(\lambda x_1+(1-\lambda)x_2,\lambda y_1+(1-\lambda)y_2)\le \lambda g(x_1,y_1) +(1-\lambda)g(x_2,y_2).
\]
Passing to the infimum with respect to $y_1,y_2\in K$ we conclude
\[
	h(\lambda x_1+(1-\lambda)x_2)\le \lambda h(x_1) +(1-\lambda)h(x_2).
\]
Regarding the coercivity of $h$, denote
\[
	 B_t:=\{x \in X : h(x) \leq t\},\qquad A_t:=\{x\in X : g(x,y)\leq t, \text{ for some }y\in K\}.
\]
Let $M,\e>0$, for all $x\in B_M$ there exists $y\in K$ such that $g(x,y)\leq h(x)+\e\leq M+\e$, therefore $ B_M \subseteq A_{M+\e}$. Since $g$ is coercive, $A_{M+\e}$ is bounded and thus $B_M$ is bounded, i.e., $h$ is coercive.
\end{proof}
In the proof of Proposition \ref{prop-f0} we need the following estimate
\begin{lemma}
\label{lemma:estima}
	For all $p\in[1,+\infty[$ there exists $C>0$ such that
	\[
		\int_\Omega |\xi + u(\omega)|^p\,d\mu \geq C \int_\Omega |\xi|^p + |u(\omega)|^p\,d\mu 
	\]
	for all $\xi \in \R^n$, for all $u\in L^p(\Omega;\R^n)$ such that $\mathbb E(u)=0$.
\end{lemma}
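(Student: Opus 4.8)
The plan is to exploit the single crucial hypothesis $\mathbb E(u)=0$ together with the fact that $\mu$ is a probability measure, which lets Jensen's inequality do almost all the work. The whole statement reduces to two elementary bounds: a lower bound of $\int_\Omega|\xi+u|^p\,d\mu$ by $|\xi|^p$, and a lower bound of the same quantity by a multiple of $\int_\Omega|u|^p\,d\mu$. Adding them will produce the constant $C$.

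First I would establish the control of the constant part. Since $t\mapsto|t|^p$ is convex on $\R^n$ and $\mu(\Omega)=1$, Jensen's inequality gives
\[
	\int_\Omega |\xi+u(\omega)|^p\,d\mu \ \geq\ \left| \int_\Omega (\xi+u(\omega))\,d\mu \right|^p = |\xi+\mathbb E(u)|^p = |\xi|^p,
\]
where the last equality is precisely the point at which the hypothesis $\mathbb E(u)=0$ enters. Note that, again because $\mu$ is a probability measure, $\int_\Omega|\xi|^p\,d\mu=|\xi|^p$, so this already yields $\int_\Omega|\xi+u|^p\,d\mu\geq\int_\Omega|\xi|^p\,d\mu$.

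Next I would control the oscillating part $u$ by writing $u=(\xi+u)-\xi$ and applying the triangle inequality (Minkowski) in $L^p(\Omega;\R^n)$:
\[
	\|u\|_{L^p} \ \leq\ \|\xi+u\|_{L^p} + \|\xi\|_{L^p} = \|\xi+u\|_{L^p} + |\xi|.
\]
Combining this with the previous step, which reads $|\xi|\leq\|\xi+u\|_{L^p}$, gives $\|u\|_{L^p}\leq 2\|\xi+u\|_{L^p}$, hence $\int_\Omega|u|^p\,d\mu\leq 2^p\int_\Omega|\xi+u|^p\,d\mu$. Adding the two lower bounds then yields
\[
	(1+2^p)\int_\Omega |\xi+u|^p\,d\mu \ \geq\ \int_\Omega \big(|\xi|^p+|u(\omega)|^p\big)\,d\mu,
\]
so the claim holds with $C=1/(1+2^p)$.

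I do not expect any genuine obstacle here: the argument is purely a combination of Jensen and Minkowski, uniform in $\xi$ and $u$, and the constant is explicit and independent of $n$. The only conceptual point worth flagging is that everything hinges on the mean-zero constraint, which is exactly what forces the Jensen estimate to reproduce $|\xi|^p$ rather than a weaker bound; without it the inequality would fail (take $u\equiv -\xi$). This is why the lemma is stated for $u\in L^p(\Omega;\R^n)$ with $\mathbb E(u)=0$, matching the spaces $\mathcal V^p_{\rm pot}$ and $\mathcal V^q_{\rm sol}$ appearing in the definition of $f_0$.
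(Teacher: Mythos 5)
Your proof is correct and follows essentially the same route as the paper's: the paper packages the two bounds into the continuity of the linear map $u\mapsto(\mathbb E(u),\,u-\mathbb E(u))$ on $L^p(\Omega;\R^n)$ and then substitutes $u=\xi+\tilde u$, which is exactly your Jensen estimate $|\xi|=|\mathbb E(\xi+u)|\leq\|\xi+u\|_{L^p}$ combined with your Minkowski estimate $\|u\|_{L^p}\leq 2\|\xi+u\|_{L^p}$. Your version has the minor advantage of producing the explicit constant $C=1/(1+2^p)$.
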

\begin{proof}
	Consider the operator
	\begin{align*}
		\Phi &:L^p(\Omega;\R^n) \to L^p(\Omega;\R^n) \times L^p(\Omega;\R^n) \\
			&\qquad u\quad \mapsto \left(\mathbb  E(u), u -\mathbb  E(u)\right).
	\end{align*}		 
On the image space $L^p(\Omega;\R^n) \times L^p(\Omega;\R^n)$, choose the equivalent norm
\[
	\|(u,v)\|_{L^p \times L^p}:=\|u\|_{L^p} + \|v\|_{L^p}.
\]
Clearly, $\Phi$ is linear and continuous. Therefore, there exists $C>0$ such that
\[
	\int_\Omega |\mathbb E(u)|^p d\mu +\int_\Omega |u(\omega) - \mathbb E(u)|^p\,d\mu \leq \|\Phi(u)\|^p_{L^p \times L^p} \leq C\|u\|^p_{L^p}=C\int_\Omega |u(\omega)|^p\,d\mu.
\]	
Apply now the last inequality to $u(\omega)=\xi + \tilde u(\omega)$, with $\mathbb E(\tilde u)=0$:
\[
	 \int_\Omega |\xi|^p +  |\tilde u(\omega)|^p\,d\mu \leq C\int_\Omega |\xi +\tilde u(\omega)|^p\,d\mu.
\]
\end{proof}

\begin{proposition}
\label{prop-f0}
For all $(\xi,\eta)\in \R^n \times \R^n$ there exists a couple $(\tu,\tv)\in \mathcal V^p_{\rm sol}(\O;\R^n) \times \mathcal V^q_{\rm pot}(\O;\R^n)$ such that the infimum on the right-hand side of \eqref{f0} is attained. Moreover, $f_0 \in \mathcal F(\R^n)$. In particular, it holds
\begin{equation}
\label{ineq:f0}
	f_0(\xi,\eta)\ge \xi\cdot \eta\quad  \forall (\xi,\eta)\in \R^n\times \R^n.
\end{equation}
\end{proposition}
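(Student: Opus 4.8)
The plan is to read \eqref{f0} as a parametric minimization and to apply Lemma \ref{lemmaV}, taking the finite--dimensional parameter space $X=\R^n\times\R^n$ for the variable $(\xi,\eta)$ and the constraint set $K=\mathcal V^p_{\rm sol}(\O;\R^n)\times\mathcal V^q_{\rm pot}(\O;\R^n)$ for the minimization variable. Writing the admissible perturbation of the first argument of $f$ as a mean--zero solenoidal field $u\in\mathcal V^p_{\rm sol}$ and that of the second as a mean--zero potential field $v\in\mathcal V^q_{\rm pot}$ (so that the growth exponents in \eqref{coercivity} match the integrability of $u,v$), I would set
\[
	g\big((\xi,\eta),(u,v)\big):=\int_\Omega f\big(\xi+u(\omega),\eta+v(\omega),\omega\big)\,d\mu,
\]
so that $f_0(\xi,\eta)=\inf_{(u,v)\in K}g\big((\xi,\eta),(u,v)\big)$. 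The proposition then reduces to verifying the hypotheses of Lemma \ref{lemmaV} for $g$ and to one explicit computation based on \eqref{ort}.

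First I would record that $K$ is admissible: by Lemma \ref{lemma:solpot} the spaces $\mathcal V^p_{\rm sol}$ and $\mathcal V^q_{\rm pot}$ are closed, hence convex and weakly closed, subspaces of the reflexive spaces $L^p(\Omega;\R^n)$ and $L^q(\Omega;\R^n)$ ($1<p,q<\infty$), so $K$ is a weakly closed convex subset of a reflexive space. Boundedness from below and coercivity of $g$ come from \eqref{coercivity}: integrating yields
\[
	g\big((\xi,\eta),(u,v)\big)\ge c\int_\Omega\big(|\xi+u|^p+|\eta+v|^q\big)\,d\mu+\mathbb E(k)\ge \mathbb E(k)>-\infty,
\]
and, since $\mathbb E(u)=\mathbb E(v)=0$, Lemma \ref{lemma:estima} bounds the middle integral below by $c'\big(|\xi|^p+\|u\|_{L^p}^p+|\eta|^q+\|v\|_{L^q}^q\big)$; hence each sublevel set $\{g\le M\}$ is bounded in $X\times K$, which is precisely the coercivity required by Lemma \ref{lemmaV}. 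Finally $g$ is convex, being the integral of the convex integrand $f(\cdot,\cdot,\omega)$ precomposed with an affine map, and it is weakly lower semicontinuous: along a strongly convergent sequence, a.e.--convergence and Fatou's lemma applied to the nonnegative integrands $f(\cdot,\cdot,\omega)-k(\omega)$ give strong lower semicontinuity, which convexity upgrades to weak lower semicontinuity.

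Granting these, Lemma \ref{lemmaV} returns at once that $f_0$ is convex, coercive and weakly --- hence, on the finite--dimensional $X$, simply --- lower semicontinuous; moreover the direct method yields a minimizer, since a minimizing sequence is confined to a fixed sublevel set, is therefore bounded by coercivity, admits a weakly convergent subsequence with limit $(\tu,\tv)\in K$ (as $K$ is weakly closed), and attains the infimum by weak lower semicontinuity of $g$. To obtain \eqref{ineq:f0} I would use property (c), $f(x,y,\omega)\ge y\cdot x$, and expand, for any admissible $(u,v)$ ($\mu$ being a probability measure),
\[
	\int_\Omega f(\xi+u,\eta+v,\omega)\,d\mu\ge\int_\Omega(\eta+v)\cdot(\xi+u)\,d\mu=\xi\cdot\eta+\eta\cdot\mathbb E(u)+\mathbb E(v)\cdot\xi+\mathbb E(u\cdot v).
\]
The two middle terms vanish because $\mathbb E(u)=\mathbb E(v)=0$, while $\mathbb E(u\cdot v)=0$ by the orthogonality \eqref{ort}, since $u$ is solenoidal and $v$ is potential; taking the infimum over $(u,v)$ gives $f_0(\xi,\eta)\ge\xi\cdot\eta$. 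This lower bound forces $f_0>-\infty$, and testing with $u=v=0$ shows $f_0(\xi,\eta)\le\int_\Omega f(\xi,\eta,\omega)\,d\mu$, finite for $(\xi,\eta)$ in the effective domain of $\omega\mapsto f(\xi,\eta,\omega)$, so $f_0\not\equiv+\infty$; together with convexity and lower semicontinuity this gives $f_0\in\mathcal F(\R^n)$.

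I expect the main obstacle to be the weak lower semicontinuity of $g$ under weak convergence of $(u,v)$ in $L^p\times L^q$: pointwise lower semicontinuity of $f$ only yields strong lower semicontinuity through Fatou, and one must genuinely use the convexity of $f$ (normality of the integrand) to pass to weak limits. The cancellation of the cross term $\mathbb E(u\cdot v)$ is the other delicate point, as this is exactly where the solenoidal/potential splitting --- not merely the mean--zero condition --- and the conjugacy $1/p+1/q=1$ enter, through \eqref{ort}.
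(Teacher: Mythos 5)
Your proposal is correct and follows essentially the same route as the paper: both reduce \eqref{f0} to the parametric minimization of the integral functional over $K=\mathcal V^p_{\rm sol}\times\mathcal V^q_{\rm pot}$, invoke \eqref{coercivity} together with Lemma \ref{lemma:estima} for coercivity, apply Lemma \ref{lemmaV} and the direct method, and obtain \eqref{ineq:f0} from $f\ge\langle\cdot,\cdot\rangle$ plus the orthogonality \eqref{ort}. The only cosmetic differences are that you prove \eqref{ineq:f0} for arbitrary admissible $(u,v)$ before taking the infimum (the paper evaluates at the minimizer) and that you make the properness of $f_0$ explicit; neither changes the argument.
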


\begin{proof}
Let $K:=\mathcal V^p_{\rm sol}(\O;\R^n)\times \mathcal V^q_{\rm pot}(\O;\R^n)$. Then $K$ is weakly closed in $L^p(\O;\R^n) \times L^q(\O;\R^n)$ since it is closed and convex. Let $\xi,\eta\in \R^n$ be fixed, for any $(u,v) \in K$ let 
\[
	F_{\xi,\eta}(u,v):= \int_\Omega f(\xi+v(\omega),\eta+u(\omega),\omega) \,d\mu.
\]
We prove that the problem $\inf_KF_{\xi,\eta}$ has a solution applying the direct method of the Calculus of Variations. First, by \eqref{coercivity}, $\inf_KF_{\xi,\eta}>-\infty$. Then, if $(u_h,v_h)\in K$ is a minimizing sequence for $F_{\xi,\eta}$, by the coercivity assumption \eqref{coercivity}, up to subsequences, $(u_h,v_h) \rightharpoonup (u,v)$ weakly in $L^p(\O;\R^n) \times L^q(\O;\R^n)$, therefore $(u,v)\in K$, since $K$ is weakly closed. Finally, $F_{\xi,\eta}$ is $L^p\times L^q$-weakly lower semicontinuous since $f(\cdot,\cdot,\omega)$ is convex, lower semicontinuous, and bounded from below by an integrable function \eqref{coercivity}, therefore 
\[
	F_{\xi,\eta}(u,v)\leq \liminf_{h \to \infty}F_{\xi,\eta}(u_h,v_h) = \inf_K F_{\xi,\eta}.
\] 
This concludes the first part of the statement. We now want to show that $f_0\in \mathcal F(\R^n)$. Owing to \eqref{coercivity} and Lemma \ref{lemma:estima}, for all $\xi,\eta,(u,v)\in \R^n\times \R^n \times K$, there exists a constant $C>0$ such that 
\begin{align*}
	F_{\xi,\eta}(u,v) & \geq c\int_\Omega |\xi +v(\omega)|^p + |\eta +u(\omega)|^q +k(\omega)\,d\mu \\
		& 	\geq C\int_\Omega |\xi|^p +|u(\omega)|^p + |\eta|^q +|v(\omega)|^q d\mu+\mathbb E(k) \\
		&\geq C\left( |\xi|^p + {\|u\|}^p_{L^p(\Omega)} + |\eta|^q + {\|v\|}^q_{L^q(\Omega)} \right) -{\|k\|}_{L^1(\O)}.
\end{align*}
Thus, for any $M\ge 0$, the set 
\[
	\left\{(\xi,\eta,(u,v))\in {\R}^n\times \R^n \times K : F_{\xi,\eta}(u,v) \leq M\right\} 
\]
is bounded in $\R^n \times \R^n \times L^p(\O;\R^n) \times L^q(\O;\R^n)$. We are therefore in a position to apply Lemma \ref{lemmaV} and to conclude that $f_0$ is convex and lower semicontinuous. Furthermore, let $(\tu,\tv)\in K$ be a minimizer of $F_{\xi,\eta}$, using \eqref{ort} 
\[
\begin{aligned}
	f_0(\xi,\eta)&= \int_\Omega f(\xi+\tu(\omega),\eta+\tv(\omega),\omega) \,d\mu\\
	&\ge  \int_\Omega (\xi+\tu(\omega))\cdot (\eta+\tv(\omega)) \,d\mu\\
	& =\mathbb E(\xi+\tu) \cdot \mathbb E(\eta + \tv)\\
	& =\xi\cdot\eta,
\end{aligned}
\]
which yields the conclusion.
\end{proof}

We denote by $\alpha_0$ the operator on $\R^n$ represented by $f_0$ through the usual relation
\[
	\eta \in \alpha_0(\xi) \quad \Leftrightarrow\quad f_0(\xi,\eta)= \xi \cdot \eta.
\] 
We refer to $\alpha_0$ as the \textit{scale integration} of $\alpha$, since it is obtained through $f_0$, which is the scale integration of the Fitzpatrick representation $f$ of $\alpha$. 

\begin{lemma}
\label{lemma:problema}
Let $\xi,\eta\in \R^n$ be such that $\eta \in \alpha_0(\xi)$. Then, there exist $u\in L^p_{\rm sol}(\O;\R^n)$ and $v\in L^q_{\rm pot}(\O;\R^n)$ such that
\begin{equation}
\label{auxiliary}
 	v(\omega)\in \alpha(u(\omega),\o), \quad \text{for a.e. }\o\in\O.
\end{equation}
Moreover, $\mathbb{E}(u)=\xi$ and $\mathbb{E}(v)=\eta$, that is 
\begin{equation}\label{step2}
	\mathbb E( v) \in \alpha_0(\mathbb E( u)).
\end{equation}
\end{lemma}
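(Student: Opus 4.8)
The plan is to exploit the fact, established in Proposition \ref{prop-f0}, that the infimum defining $f_0(\xi,\eta)$ is actually attained, and then to read the hypothesis $\eta \in \alpha_0(\xi)$ — i.e.\,$f_0(\xi,\eta)=\xi\cdot\eta$ — as a statement forcing pointwise equality in the Fitzpatrick inequality almost everywhere.

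First I would invoke Proposition \ref{prop-f0} to fix a minimizing pair $(\tu,\tv)\in \mathcal V^p_{\rm sol}(\O;\R^n) \times \mathcal V^q_{\rm pot}(\O;\R^n)$ realizing the infimum in \eqref{f0}, and set
\[
	u(\omega) := \xi + \tu(\omega), \qquad v(\omega) := \eta + \tv(\omega).
\]
Since constant fields are simultaneously solenoidal and potential and $\mathbb E(\tu)=\mathbb E(\tv)=0$, this at once gives $u \in L^p_{\rm sol}(\O;\R^n)$, $v \in L^q_{\rm pot}(\O;\R^n)$, together with $\mathbb E(u)=\xi$ and $\mathbb E(v)=\eta$; in particular \eqref{step2} is then nothing but a restatement of the hypothesis $\eta\in\alpha_0(\xi)$. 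Hence the only genuine content left is the pointwise inclusion \eqref{auxiliary}.

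For that I would compare three quantities. By the choice of the minimizer and the hypothesis,
\[
	\int_\Omega f(u(\omega),v(\omega),\omega)\,d\mu = f_0(\xi,\eta) = \xi\cdot\eta.
\]
On the other hand, property (c) of a measurable representative function gives the pointwise bound $f(u(\omega),v(\omega),\omega)\ge u(\omega)\cdot v(\omega)$ for a.e.\,$\omega$, while the orthogonality relation \eqref{ort} of Lemma \ref{lemma:solpot}, applicable precisely because $u$ is solenoidal and $v$ is potential with conjugate integrabilities, yields
\[
	\int_\Omega u(\omega)\cdot v(\omega)\,d\mu = \mathbb E(u\cdot v) = \mathbb E(u)\cdot\mathbb E(v) = \xi\cdot\eta.
\]
Subtracting, the nonnegative function $\omega \mapsto f(u(\omega),v(\omega),\omega) - u(\omega)\cdot v(\omega)$ has vanishing integral, hence vanishes a.e.; by the representation \eqref{eq:represent} this equality $f(u(\omega),v(\omega),\omega)=u(\omega)\cdot v(\omega)$ is exactly $v(\omega)\in\alpha(u(\omega),\omega)$ for a.e.\,$\omega$, which is \eqref{auxiliary}.

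Every step is short, and I do not expect any deep analytic difficulty. The points that need care are the integrability bookkeeping that makes the displayed integrals finite and the subtraction legitimate — here $u\in L^p$ and $v\in L^q$ give $u\cdot v\in L^1$ by H\"older, and the coercivity bound \eqref{coercivity} shows $f(u,v,\cdot)$ is bounded below by the integrable function $k$, so $f(u,v,\cdot)\in L^1$ — and, above all, keeping the solenoidal/potential pairing correct so that \eqref{ort} genuinely applies. I expect this bookkeeping, rather than any real obstacle, to be the main thing to get right.
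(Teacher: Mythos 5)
Your proposal is correct and follows essentially the same route as the paper's own proof: take the minimizing pair from Proposition \ref{prop-f0}, shift by the constants $\xi,\eta$, and use the orthogonality relation \eqref{ort} together with the pointwise Fitzpatrick inequality to force $f(u(\omega),v(\omega),\omega)=u(\omega)\cdot v(\omega)$ a.e., which is \eqref{auxiliary}. The only cosmetic difference is that the paper writes the argument as a chain of inequalities collapsing to an equality, while you phrase it as a nonnegative integrand with vanishing integral; the content is identical.
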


\begin{proof}
Since $f_0$ represents $\alpha_0$, $\eta \in \alpha_0(\xi)$ implies
\begin{equation}
\label{eq:f0repr}
	f_0(\xi,\eta) = \xi \cdot \eta.
\end{equation}
Take now $\tu \in \mathcal V^p_{\rm sol}(\O;\R^n)$ and $ \tv \in \mathcal V^q_{\rm pot}(\O;\R^n)$ such that
\begin{equation}
\label{eq:hyp1}
		f_0(\xi,\eta)= \int_\Omega f(\xi+\tu(\omega),\eta+\tv(\omega),\omega) \,d\mu.
\end{equation}
Since $\mathbb E(\tu)=\mathbb E(\tv)=0$,
\begin{align*}
\xi \cdot \eta&=\mathbb E(\xi+\tu) \cdot \mathbb E(\eta+\tv)\\
	& \stackrel{\eqref{ort}}{=}\int_\O (\xi+\tu(\o))\cdot (\eta+\tv(\o))\,d\mu\\
	&\stackrel{f \in \mathcal F(\R^n)}{\le} \int_\Omega f(\xi+\tu(\omega),\eta+\tv(\omega),\omega) \,d\mu\\
	& \stackrel{\eqref{eq:hyp1}}{=} f_0(\xi,\eta)\\
	& \stackrel{\eqref{eq:f0repr}}{=}\xi \cdot \eta
\end{align*}
from which we obtain 
\begin{equation}
\label{eq:312}
	(\xi+\tu(\o))\cdot (\eta+\tv(\o))= f(\xi+\tu(\omega),\eta+\tv(\omega),\omega), \quad \text{a.e. }\o\in\O.
\end{equation}
 Let $u(\omega):=\xi+\tu(\omega)$ and $ v(\omega):=\eta+\tv(\omega)$. Since $f$ represents $\alpha$, \eqref{eq:312} is equivalent to \eqref{auxiliary}. 
Moreover, since $\mathbb E(u)=\xi$ and $\mathbb E( v)=\eta$, $\eta \in \alpha_0(\xi)$ implies also \eqref{step2}.
\end{proof}

Lemma \ref{lemma:problema} is also referred to as \textit{scale disintegration} (see \cite[Theorem 4.4]{VisintinAsAn2013}), as it shows that given a solution $(\xi,\eta)$ to the integrated problem $\eta \in \alpha_0(\xi)$, it is possible to build a solution to the original problem $v(\o) \in \alpha(u(\o),\o)$. The converse, known as \textit{scale integration} (see \cite[Theorem 4.3]{VisintinAsAn2013}) is provided by the next Lemma.

\begin{lemma}
\label{lemma:scaleintegration}
Let $u\in L^p_{\rm sol}(\O;\R^n)$ and $v\in L^q_{\rm pot}(\O;\R^n)$ satisfy
\begin{equation}
\label{auxiliary:int}
	 v(\omega)\in \alpha(u(\omega),\o), \quad \textrm{for a.e. }\o\in\O,
\end{equation}
then
\begin{equation}
\label{eq:integrated}
	\mathbb E(v) \in \alpha_0(\mathbb E(u)).
\end{equation}
\end{lemma}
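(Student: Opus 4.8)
The plan is to recast the claim \eqref{eq:integrated} as the scalar identity $f_0(\xi,\eta)=\xi\cdot\eta$, where I abbreviate $\xi:=\mathbb E(u)$ and $\eta:=\mathbb E(v)$; indeed, by the very definition of $\alpha_0$ this identity is equivalent to $\eta\in\alpha_0(\xi)$. One of the two inequalities comes for free: since $f_0\in\mathcal F(\R^n)$ by Proposition \ref{prop-f0}, the bound \eqref{ineq:f0} gives $f_0(\xi,\eta)\ge\xi\cdot\eta$. Hence the whole matter reduces to proving the reverse inequality $f_0(\xi,\eta)\le\xi\cdot\eta$, and this is the only place where the hypothesis \eqref{auxiliary:int} is used.

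To obtain the upper bound I would feed the given pair $(u,v)$ into the variational problem \eqref{f0}. Set $\tilde u:=u-\xi$ and $\tilde v:=v-\eta$. By construction $\mathbb E(\tilde u)=\mathbb E(\tilde v)=0$, and since a constant field is simultaneously solenoidal and potential, subtracting the mean does not alter the solenoidal, resp. potential, character of the realizations; therefore $\tilde u\in\mathcal V^p_{\rm sol}(\O;\R^n)$ and $\tilde v\in\mathcal V^q_{\rm pot}(\O;\R^n)$. Thus $(\tilde u,\tilde v)$ is an admissible test pair in \eqref{f0}, and with this choice the integrand becomes $f(u(\omega),v(\omega),\omega)$, whence
\[
	f_0(\xi,\eta)\le \int_\Omega f(u(\omega),v(\omega),\omega)\,d\mu.
\]

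The final step turns the integrand into a product. Because $f$ represents $\alpha$, the pointwise inclusion \eqref{auxiliary:int} is, by \eqref{eq:represent}, equivalent to the pointwise equality $f(u(\omega),v(\omega),\omega)=u(\omega)\cdot v(\omega)$ for a.e. $\omega$. Inserting this and then applying the orthogonality relation \eqref{ort}, which is legitimate precisely because $u\in L^p_{\rm sol}(\O;\R^n)$ and $v\in L^q_{\rm pot}(\O;\R^n)$, I get
\[
	f_0(\xi,\eta)\le \int_\Omega u(\omega)\cdot v(\omega)\,d\mu=\mathbb E(u\cdot v)=\mathbb E(u)\cdot\mathbb E(v)=\xi\cdot\eta.
\]
Together with the lower bound this forces $f_0(\xi,\eta)=\xi\cdot\eta$, i.e. $\mathbb E(v)\in\alpha_0(\mathbb E(u))$, which is \eqref{eq:integrated}.

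Since the argument is a two-inequality sandwich built directly from the definitions, I do not anticipate a genuine obstacle. The only points demanding care are routine: checking that $\tilde u$ and $\tilde v$ indeed lie in the mean-zero solenoidal, resp. potential, subspaces so that they qualify as competitors in \eqref{f0}, and noting that $u\cdot v\in L^1(\Omega)$ (by H\"older, from $u\in L^p$ and $v\in L^q$) so that the expectation and the orthogonality identity \eqref{ort} are meaningful.
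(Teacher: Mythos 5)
Your proof is correct and follows essentially the same route as the paper's: both establish the identity $f_0(\mathbb E(u),\mathbb E(v))=\mathbb E(u)\cdot\mathbb E(v)$ by sandwiching it between the lower bound $f_0\ge\xi\cdot\eta$ from Proposition \ref{prop-f0} and the upper bound obtained by using $(u-\mathbb E(u),v-\mathbb E(v))$ as competitors in \eqref{f0}, together with the representation property \eqref{eq:represent} and the orthogonality relation \eqref{ort}. The only difference is that you spell out the admissibility of the shifted pair as test functions, which the paper leaves implicit.
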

\begin{proof}
By \eqref{auxiliary:int} and \eqref{ort}
\[
	\int_\Omega f(u(\o),v(\o),\o)\,d\mu =  \int_\Omega u(\o)\cdot v(\o)\,d\mu = \mathbb E(u)\cdot \mathbb E(v).
\]
On the other hand, by definition of $f_0$,
\[
	\int_\Omega f(u(\o),v(\o),\o)\,d\mu \geq f_0(\mathbb E(u), \mathbb E(v)) \geq \mathbb E(u)\cdot \mathbb E(v).
\]	
We conclude that $f_0(\mathbb E(u), \mathbb E(v)) = \mathbb E(u)\cdot \mathbb E(v)$, which yields \eqref{eq:integrated}.
\end{proof}
How the properties of $\alpha$ and $f$ reflect on $\alpha_0$ and $f_0$ was thoroughly studied in \cite{VisintinAsAn2013}:
\begin{theorem}
\label{th:inherit}
If 
 \begin{itemize}
 	\item $f\in \mathcal F(\O,\R^n)$ is uniformly bounded from below, 
	\item there exists $(u,v)\in L^p(\O;\R^n) \times L^q(\O;\R^n)$ such that
	\[
		\int_\O f(u(\o),v(\o),\o)\,d\mu <+\infty,
	\]
	\item $f$ represents a maximal monotone operator for $\mu$-a.e. $\o \in \O$
\end{itemize}	
then $f_0$ represents a (proper) maximal monotone operator  \textup{\cite[Theorem 5.3]{VisintinAsAn2013}}. Moreover, if $f$ is strictly convex, then 
\begin{itemize}
	\item $f_0$ is strictly convex \textup{\cite[Lemma 5.4]{VisintinAsAn2013}},
	\item the operators $\alpha_0$ and $\alpha_0^{-1}$ are both strictly monotone \textup{\cite[Proposition 5.5]{VisintinAsAn2013}} 
\end{itemize}
and if Dom$(\alpha_0)$ and Dom$(\alpha_0^{-1})$ are unbounded, then $\alpha_0$ and $\alpha_0^{-1}$ are coercive \textup{\cite[Proposition 5.6]{VisintinAsAn2013}}. 
\end{theorem}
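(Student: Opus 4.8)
The plan is to treat the three assertions in turn, following Visintin's scale integration/disintegration philosophy \cite{VisintinAsAn2013} but carried out in the probability space $(\O,\mathcal A,\mu)$. For the first claim, recall from Proposition \ref{prop-f0} that $f_0\in\mathcal F(\R^n)$, so by Lemma \ref{lemma:antifp}(i) the operator $\alpha_0$ is already monotone and $f_0(\xi,\eta)\ge\xi\cdot\eta$; the finiteness hypothesis guarantees $f_0\not\equiv+\infty$ (properness of $\alpha_0$), while the uniform lower bound makes the marginal and conjugate operations below well posed. Hence the only point to prove is \textbf{maximality}. I would invoke the standard conjugacy characterisation of representative functions of maximal monotone operators (see \cite{VisintinAsAn2013}): $f_0$ represents a maximal monotone operator if and only if, in addition to $f_0\ge$ duality, its Fenchel conjugate satisfies $f_0^\ast(\eta,\xi)\ge\xi\cdot\eta$ for all $(\xi,\eta)$, the conjugate being taken in the natural pairing on $\R^n\times\R^n$ and evaluated at the transposed argument.

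The key step, and the one I expect to be the \textbf{main obstacle}, is to show that \emph{scale integration commutes with Fenchel conjugation}, namely $f_0^\ast=(f^\ast)_0$, where $(f^\ast)_0$ is built from $f^\ast(\cdot,\cdot,\omega)$ exactly as $f_0$ is built from $f$ in \eqref{f0}. This is where the structure of $L^p(\O;\R^n)$ enters decisively: conjugating the marginal function \eqref{f0}, the infimum over the subspace of zero-mean solenoidal/potential fields turns by duality into an infimum over its orthogonal complement, and Lemma \ref{lemma:solpot} identifies that complement, modulo the constants $\R^n$ carrying the means $(\xi,\eta)$, with the companion pair of spaces. One must track the $\R^n$-mean component carefully and use attainment of the infimum (Proposition \ref{prop-f0}) to justify the interchange of $\inf$ and $\sup$. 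Granting $f_0^\ast=(f^\ast)_0$, the hypothesis that $f(\cdot,\cdot,\omega)$ represents a maximal monotone operator for a.e.\,$\omega$ gives $f^\ast(\eta,\xi,\omega)\ge\xi\cdot\eta$ a.e.; integrating this inequality exactly as in the closing display of the proof of Proposition \ref{prop-f0} yields $(f^\ast)_0(\eta,\xi)\ge\xi\cdot\eta$, hence $f_0^\ast(\eta,\xi)\ge\xi\cdot\eta$, and maximality of $\alpha_0$ follows. (Alternatively, maximality could be obtained via the Minty surjectivity criterion in $\R^n$, solving $w\in\alpha(\cdot,\omega)+\mathrm{Id}$ pointwise and scale-integrating through Lemma \ref{lemma:scaleintegration} to show $\alpha_0+\mathrm{Id}$ is onto; I prefer the conjugacy route as it is cleaner in the present variational setting.)

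For the \textbf{strict convexity} of $f_0$ I would argue directly from attainment. Fix $(\xi_1,\eta_1)\ne(\xi_2,\eta_2)$ and $\lambda\in(0,1)$, and let $(\tu_i,\tv_i)$ be minimizers in \eqref{f0} for $(\xi_i,\eta_i)$. Since the admissible set is a linear subspace, the convex combination of the two minimizers is admissible for the midpoint, whence
\[
	f_0\big(\lambda(\xi_1,\eta_1)+(1-\lambda)(\xi_2,\eta_2)\big)\le \lambda f_0(\xi_1,\eta_1)+(1-\lambda)f_0(\xi_2,\eta_2).
\]
If equality held, strict convexity of $f(\cdot,\cdot,\omega)$ would force the two arguments of $f$ to agree $\mu$-a.e.; taking expectations and using that the fluctuations have zero mean gives $(\xi_1,\eta_1)=(\xi_2,\eta_2)$, a contradiction. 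Hence the inequality is strict and $f_0$ is strictly convex.

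Finally, the strict monotonicity of $\alpha_0$ and $\alpha_0^{-1}$ and the coercivity statement I would deduce from the general dictionary between the convexity and growth of a representative function and the monotonicity and coercivity of the operator it represents \cite{VisintinAsAn2013}: strict convexity of $f_0$, together with that of $f_0^\ast=(f^\ast)_0$ obtained symmetrically, yields strict monotonicity of both $\alpha_0$ and $\alpha_0^{-1}$, while an unbounded domain combined with strict monotonicity forces the coercivity $\xi\cdot\eta/|\xi|\to+\infty$ along the graph. These last points are routine once the conjugation identity $f_0^\ast=(f^\ast)_0$ is available, so the whole argument hinges on that identity.
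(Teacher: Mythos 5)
The paper does not actually prove Theorem \ref{th:inherit}: it is a summary of results imported wholesale from Visintin, with each bullet point carrying its own citation to \cite{VisintinAsAn2013} (Theorem 5.3, Lemma 5.4, Propositions 5.5 and 5.6). So there is no in-paper argument to compare against; what you have done is reconstruct, in outline, Visintin's own proofs, and your architecture does match his: maximality of $\alpha_0$ via the Fitzpatrick--Burachik--Svaiter conjugacy criterion ($f_0\ge\pi$ and $f_0^{\ast}$ transposed $\ge\pi$) together with the identity ``conjugation commutes with scale integration''; strict convexity of $f_0$ by exploiting attainment of the infimum in \eqref{f0} and taking expectations of the zero-mean fluctuations; strict monotonicity and coercivity read off from $f_0$ and $f_0^\ast$. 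The strict-convexity argument is complete and correct as written. Note also that the paper's own Lemma \ref{lemma:strictmon} offers an independent route to strict monotonicity of $\alpha_0$ (via scale disintegration rather than via $f_0$), so that bullet is doubly covered.

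Two points deserve flagging. First, the step you rightly identify as the crux --- $f_0^{\ast}=(f^{\ast})_0$ --- is not justified by ``attainment of the infimum in \eqref{f0}''. Writing $f_0$ as the inf-projection of $F+\iota_S$ with $F(U,V)=\int_\O f(U,V,\o)\,d\mu$ and $S=L^p_{\sol}\times L^q_{\pot}$, the inequality $(F+\iota_S)^{\ast}\le F^{\ast}\,\square\,\iota_{S^{\perp}}$ is free, but it points the \emph{wrong} way: it bounds $f_0^{\ast}$ from above by $(f^{\ast})_0$, whereas maximality needs the lower bound $f_0^{\ast}\ge(f^{\ast})_0\ge\pi$. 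That reverse inequality is exactly the absence of a Fenchel--Rockafellar duality gap, and primal attainment neither implies it nor is needed for it; what does supply it here is the standing coercivity \eqref{coercivity}, which forces $f^{\ast}$ to have at most $(q,p)$-growth, hence $F^{\ast}$ to be finite and continuous on all of $L^q\times L^p$ --- a standard qualification condition under which the infimal convolution is exact. Second, two small slips: properness of $f_0$ does not follow immediately from the stated finiteness hypothesis, since the pair $(u,v)$ there is a generic element of $L^p\times L^q$ and need not decompose as constant plus zero-mean solenoidal/potential fields; and for strict monotonicity of \emph{both} $\alpha_0$ and $\alpha_0^{-1}$ you do not need strict convexity of $f_0^{\ast}$ at all --- the midpoint computation in the proof of Lemma \ref{lemma:antifp}(i) shows that strict convexity of $f_0$ alone forbids two distinct graph points with vanishing monotonicity product.
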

In order to obtain strict monotonicity of $\alpha_0$ and $\alpha_0^{-1}$, by the next Lemma we provide an alternative to strict convexity of the Fitzpatrick function.
\begin{lemma}
\label{lemma:strictmon}
Let $\alpha(\cdot,\omega):B \to B'$ be maximal and strictly monotone, and assume that its Fitzpatrick representation $f$ is coercive, in the sense of \eqref{coercivity}. Then its scale integration $\alpha_0$ is strictly monotone.
\end{lemma}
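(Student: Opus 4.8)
The plan is to combine the scale disintegration of Lemma \ref{lemma:problema} with the pointwise strict monotonicity of $\alpha(\cdot,\omega)$, and then to recover the required lower bound on the macroscopic increment by Jensen's inequality. Throughout I assume that the strict monotonicity constant $\theta>0$ in \eqref{mon-strict} can be taken independent of $\omega$ (otherwise one replaces $\theta$ by the essential infimum of $\theta(\omega)$, which must be positive). Note that the coercivity \eqref{coercivity} of the Fitzpatrick function $f$ is exactly what is needed to guarantee that Lemma \ref{lemma:problema} is applicable.

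First I would fix $\xi_1,\xi_2\in\R^n$ and pick $\eta_1\in\alpha_0(\xi_1)$, $\eta_2\in\alpha_0(\xi_2)$. Applying Lemma \ref{lemma:problema} to each pair produces, for $i=1,2$, a solenoidal $u_i\in L^p_{\rm sol}(\Omega;\R^n)$ and a potential $v_i\in L^q_{\rm pot}(\Omega;\R^n)$ with $v_i(\omega)\in\alpha(u_i(\omega),\omega)$ for a.e.\,$\omega$, and $\mathbb E(u_i)=\xi_i$, $\mathbb E(v_i)=\eta_i$. Since both $(u_1(\omega),v_1(\omega))$ and $(u_2(\omega),v_2(\omega))$ lie on the graph of the strictly monotone operator $\alpha(\cdot,\omega)$, the inequality \eqref{mon-strict} yields the pointwise bound
\[
	\big(v_1(\omega)-v_2(\omega)\big)\cdot\big(u_1(\omega)-u_2(\omega)\big)\ \geq\ \theta\,|u_1(\omega)-u_2(\omega)|^2 \qquad \text{for a.e.\,}\omega\in\Omega.
\]

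Next I would integrate over $\Omega$. The left-hand side is integrable by H\"older's inequality (as $u_i\in L^p$ and $v_i\in L^q$), and since $u_1-u_2$ is solenoidal while $v_1-v_2$ is potential, the orthogonality relation \eqref{ort} identifies its integral with the product of the means:
\[
	\int_\Omega (v_1-v_2)\cdot(u_1-u_2)\,d\mu\ =\ \mathbb E(u_1-u_2)\cdot\mathbb E(v_1-v_2)\ =\ (\xi_1-\xi_2)\cdot(\eta_1-\eta_2).
\]
Finally, applying Jensen's inequality to the convex map $z\mapsto|z|^2$ on the probability space gives $\int_\Omega |u_1-u_2|^2\,d\mu \geq |\mathbb E(u_1-u_2)|^2 = |\xi_1-\xi_2|^2$. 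Chaining the last two displays with the integrated pointwise bound produces $(\eta_1-\eta_2)\cdot(\xi_1-\xi_2)\geq\theta\,|\xi_1-\xi_2|^2$, which is precisely strict monotonicity of $\alpha_0$, with the same constant $\theta$.

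I do not expect a serious obstacle: the argument is short and the conceptual heart is simply that orthogonality converts the integrated microscopic pairing into the macroscopic one, while Jensen bounds the macroscopic increment from below by its mean, so that no (strict) convexity of $f$ is required, in contrast with the route through Theorem \ref{th:inherit}. The only points requiring care are the uniformity of $\theta$ in $\omega$ and the finiteness of the integrals; the latter is not assumed but follows, since H\"older gives integrability of the duality pairing and the pointwise inequality then forces $\int_\Omega|u_1-u_2|^2\,d\mu$ to be finite.
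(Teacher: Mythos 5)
Your proof is correct and follows essentially the same route as the paper's: scale disintegration via Lemma \ref{lemma:problema}, the pointwise strict monotonicity of $\alpha(\cdot,\omega)$, the orthogonality relation \eqref{ort}, and Jensen's inequality, in that order. The only difference is that you make explicit the (reasonable) caveats about the uniformity of $\theta$ in $\omega$ and the integrability of the pairing, which the paper leaves implicit.
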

\begin{proof}
	For all $\eta_i\in \alpha_0(\xi_i)$, $i=1,2$, by Lemma \ref{lemma:problema}, there exist $u_i\in L^p_{\rm sol}(\O;\R^n)$ and $v_i\in L^q_{\rm pot}(\O;\R^n)$ such that
\begin{equation}
\label{smon1}
 	v_i(\omega)\in \alpha(u_i(\omega),\o), \quad \text{for a.e. }\o\in\O
\end{equation}
and $\mathbb{E}(u_i)=\xi_i$,  $\mathbb{E}(v_i)=\eta_i$. By \eqref{ort}, strict monotonicity of $\alpha$, and Jensen's inequality
\begin{align*}
	(\eta_2-\eta_1)\cdot(\xi_2-\xi_1) &= \int_\O (v_2(\o)-v_1(\o))\cdot(u_2(\o)-u_1(\o)) d\mu \\
		& \geq \theta \int_\O |u_2(\o)-u_1(\o)|^2d\mu \\
		& \geq \theta \left|\int_\O u_2(\o)-u_1(\o)d\mu \right|^2\\
		& =\theta \left| \xi_2 -\xi_1 \right|^2.
\end{align*}
\end{proof}

\subsection{Main result}
Let $D\subset \R^n$ be a Lipschitz and bounded domain with $|D|>0$. 
We recall the following classical result.
\begin{lemma}[Div-Curl lemma, \cite{Murat78}]
\label{lemma:divcurl}
Let $p\in ]1,+\infty[$, $q:=p/(p-1)$. Let $v^n,v \in L^{q}(D)^m$ and $u^n,u \in L^p(D)^m$ be  such that
\[ 
	v_n \weak v \quad \mbox{weakly in }L^{q}(D)^m, \qquad u^n \weak u\quad \mbox{weakly in }L^p(D)^m.
\]	
In addition, assume that
\[
	\{ \Curl v^n\} \text{ is compact in }W^{-1,q}(D),\qquad
	\{\Div u^n \} \text{ is compact in }W^{-1,p}(D).
\]
Then	
\[ 
	v^n \cdot u^n \stackrel{*}{\weak} v \cdot u \qquad \mbox{in }\mathscr D'(D).
\]		
\end{lemma}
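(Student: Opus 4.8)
The statement is the classical compensated-compactness lemma of Murat and Tartar, so the plan is to reproduce its potential-theoretic proof. Since convergence in $\mathscr D'(D)$ is tested against a single $\varphi\in\mathscr D(D)$, it suffices to fix such a $\varphi$ and prove
\[
	\int_D \varphi\, u^n\cdot v^n\,dx\ \longrightarrow\ \int_D \varphi\, u\cdot v\,dx .
\]
All the strong convergences below are obtained only along subsequences, but since the limit identified is independent of the extracted subsequence, a standard Urysohn subsequence argument upgrades this to convergence of the full sequence. First I would localize to a ball $B$ with $\operatorname{supp}\varphi\subset B$ and $\overline B\subset D$, on which the fields may be restricted or cut off so that the elliptic solution operators used below are available.

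The core idea is a Helmholtz-type splitting that turns the ill-behaved product of two weakly convergent fields into terms where one factor converges strongly. Exploiting the divergence control on $u^n$, I would solve $\Delta z^n=\Div u^n$ in $B$ with homogeneous Dirichlet data; since the solution operator $W^{-1,p}(B)\to W^{1,p}_0(B)$ is bounded and $\{\Div u^n\}$ is precompact in $W^{-1,p}$, the gradients $\nabla z^n$ are precompact, hence $\nabla z^n\to\nabla z$ strongly in $L^p(B)$, with $\tilde u^n:=u^n-\nabla z^n$ divergence-free and $\tilde u^n\weak \tilde u:=u-\nabla z$ weakly in $L^p$. Splitting
\[
	\int_B \varphi\, u^n\cdot v^n\,dx=\int_B \varphi\,\nabla z^n\cdot v^n\,dx+\int_B \varphi\,\tilde u^n\cdot v^n\,dx ,
\]
the first integral passes to the limit by the strong--weak pairing of $\nabla z^n\to\nabla z$ in $L^p$ against $v^n\weak v$ in $L^q$.

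For the second integral I would symmetrically use the curl control on $v^n$: after multiplying by a cutoff equal to $1$ on $\operatorname{supp}\varphi$ (which keeps $\Curl(\chi v^n)$ precompact in $W^{-1,q}$, using that $L^q\hookrightarrow\hookrightarrow W^{-1,q}$ on a bounded domain), the Newtonian potential yields $v^n=\nabla\pi^n+s^n$, where $s^n$ is built from $\Curl v^n$ and, by the $L^q$ boundedness of the associated Calder\'on--Zygmund operators together with the precompactness of $\{\Curl v^n\}$, converges strongly, $s^n\to s$ in $L^q(B)$, while $v^n-s^n$ is curl-free, hence a gradient $\nabla\pi^n$. Then $\int_B\varphi\,\tilde u^n\cdot s^n\to\int_B\varphi\,\tilde u\cdot s$ by strong--weak pairing, and the remaining term is where the compensation occurs: integrating by parts and using $\Div\tilde u^n=0$,
\[
	\int_B \varphi\,\tilde u^n\cdot\nabla\pi^n\,dx=-\int_B \pi^n\,\nabla\varphi\cdot\tilde u^n\,dx ,
\]
where, after normalizing its mean, $\pi^n$ is bounded in $W^{1,q}(B)$, so by Rellich--Kondrachov $\pi^n\to\pi$ strongly in $L^q(B)$; pairing this strong factor against $\tilde u^n\weak\tilde u$ gives the limit $\int_B\varphi\,\tilde u\cdot\nabla\pi$. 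Summing reconstructs $\int_B\varphi\,\tilde u\cdot v$, and together with the first integral the total limit is $\int_B\varphi\,(\nabla z+\tilde u)\cdot v=\int_B\varphi\,u\cdot v$, as desired.

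The main obstacle, and the only genuinely analytic input, is the construction and $L^p$/$L^q$ mapping properties of these elliptic solution operators away from the Hilbert case $p=q=2$: one needs Calder\'on--Zygmund estimates to ensure that the potential parts built from $\Div u^n$ and $\Curl v^n$ depend continuously on their data, so that precompactness in the negative Sobolev spaces transfers to strong convergence of the gradient and curl components. Once this is secured, the remaining mechanism is purely the compensated-compactness cancellation: a gradient paired with a divergence-free field integrates by parts into a Rellich-compact factor, which is exactly what defeats the failure of weak$\times$weak convergence of the duality product.
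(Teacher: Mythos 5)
The paper offers no proof of this lemma: it is stated verbatim as a classical result and attributed to Murat \cite{Murat78}, so there is no internal argument to compare yours against. That said, your proposal is a correct reconstruction of the standard Murat--Tartar proof via Helmholtz decomposition, and the overall mechanism --- peel off the potential part of $u^n$ using the compactness of $\{\Div u^n\}$, peel off the ``solenoidal'' part of $v^n$ using the compactness of $\{\Curl v^n\}$, and handle the remaining gradient-against-divergence-free pairing by integrating by parts onto a Rellich-compact primitive $\pi^n$ --- is exactly right. Two places deserve slightly more care than your sketch gives them. First, the construction of $s^n$: one should make explicit that, after cutting off, one solves $\Delta w^n=\chi v^n$ by the Newtonian potential and writes $\chi v^n=\nabla(\Div w^n)+s^n$ with $s^n_i=\sum_j\partial_j\bigl(\partial_j w^n_i-\partial_i w^n_j\bigr)$, so that $s^n$ is the image of $\Curl(\chi v^n)$ under a Calder\'on--Zygmund operator of order $-1$, bounded from $W^{-1,q}$ to $L^q$; this is what converts precompactness of the curls into strong $L^q$ convergence of $s^n$, and it is the step you correctly identify as the only hard analytic input. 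Second, you extract subsequences at several stages, but each limit ($\nabla z$, $s$, $\pi$ up to the normalizing constant, and hence the value $\int\varphi\,u\cdot v$) is uniquely determined by the weak limits $u$ and $v$, so the Urysohn argument you invoke at the outset does close the loop; it would be worth saying explicitly that $\nabla\pi^n=\chi v^n-s^n\rightharpoonup v-s$ on $\operatorname{supp}\varphi$, so that the three limit terms recombine to $\int\varphi\,(\nabla z+\tilde u)\cdot(s+\nabla\pi)=\int\varphi\,u\cdot v$. With those points filled in, the proof is complete and is the standard one.
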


We are now ready to prove our main result concerning the stochastic homogenization of a maximal monotone relation.
\begin{theorem}
\label{newmain}
Let $(\Omega,\mathcal{A},\mu)$ be a probability space with an $n$-dimensional ergodic dynamical system $T_x:\Omega \to \Omega$, $x\in\R^n$. Let $D\subset \R^n$ be a bounded domain, let $p \in ]1,+\infty[$ and $q=p/(p-1)$. Let $\alpha: \R^n \times \Omega \to \mathcal P(\R^n)$ be a closed-valued, measurable, maximal monotone random operator, in the sense of \eqref{1}--\eqref{3}.

Let $f$ be a Fitzpatrick representation of $\alpha$, as in \eqref{eq:represent}. Let $f$ satisfy \eqref{coercivity} and assume that for $\mu$-a.e. $\o \in \Omega$ and $\e \geq 0$ there exists a couple 
\[
	(J_\o^\e,E_\o^\e)\in L^p(D;\R^n)\times L^q(D;\R^n)
\]	 
such that
\begin{subequations}
\begin{equation}
\label{e:bound}
	\{ \Div J_\o^\e\}_{\e \geq 0} \text{ is compact in }W^{-1,p}(D),\quad 	
		\{ \Curl E_\o^\e\}_{\e \geq 0} \text{ is compact in }W^{-1,q}(D;\R^3), 
\end{equation}   	
\begin{equation}	
\label{e:convergence}
	\lim_{\e \to 0} J_\o^\e = J_\o^0 \text{ weakly, in }L^p(D),\quad
		\lim_{\e \to 0} E_\o^\e = E_\o^0 \text{ weakly, in }L^q(D),
\end{equation}    
\begin{equation}
\label{e:inclusion}
	E_\o^\e(x) \in \alpha(J_\o^\e(x),T_{x/\e}\omega) \text{ a.e. in }D.
\end{equation}
\end{subequations}
Then, for $\mu$-a.e. $\o \in \Omega$
\begin{equation}	
\label{hom1}
		E_\o^0(x) \in \alpha_0(J_\o^0(x)) \text{ a.e. in }D,
\end{equation}
where $\alpha_0$ is the maximal monotone operator represented by the homogenized representation $f_0 \colon \R^n \times \R^n \to \R  \cup \{+\infty\}$ defined by
\[
	f_0(\xi,\eta):=\inf \bigg\{ \int_\Omega f(\xi+u(\omega),\eta+v(\omega),\omega) \,d\mu :u \in \mathcal V^p_{\rm sol}(\O;\R^n), v\in \mathcal V^q_{\rm pot}(\O;\R^n) \bigg\}.
\]

\end{theorem}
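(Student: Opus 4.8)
The plan is to verify the defining inclusion of $\alpha_0$ by appealing to its maximal monotonicity: it suffices to show that, for $\mu$-a.e.\ $\o$ and a.e.\ $x\in D$,
\[
	\big(E_\o^0(x)-\eta\big)\cdot\big(J_\o^0(x)-\xi\big)\ge 0\qquad\text{for every }(\xi,\eta)\text{ with }\eta\in\alpha_0(\xi),
\]
since then, by the very definition of maximal monotonicity, the pair $\big(J_\o^0(x),E_\o^0(x)\big)$ lies in the graph of $\alpha_0$. The crux is to produce, for each such test point $(\xi,\eta)$, a \emph{recovery sequence} lying on the graph of the oscillating operator, and this is exactly what \emph{scale disintegration} furnishes.

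First I fix a test point $\eta\in\alpha_0(\xi)$ and invoke Lemma \ref{lemma:problema} to obtain $u\in L^p_{\rm sol}(\O;\R^n)$ and $v\in L^q_{\rm pot}(\O;\R^n)$ with $v(\o')\in\alpha(u(\o'),\o')$ for a.e.\ $\o'\in\O$, and $\mathbb E(u)=\xi$, $\mathbb E(v)=\eta$. I then introduce the oscillating test fields $u_\e(x):=u(T_{x/\e}\o)$ and $v_\e(x):=v(T_{x/\e}\o)$. By Birkhoff's Theorem \ref{B} and \eqref{birk:weak}, $u_\e\weak\xi$ in $L^p(D;\R^n)$ and $v_\e\weak\eta$ in $L^q(D;\R^n)$; because $u$ is solenoidal and $v$ is potential, their realizations satisfy $\Div u_\e=0$ and $\Curl v_\e=0$, so both families are trivially compact in the relevant negative Sobolev spaces. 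A Fubini argument resting on the measure-invariance \eqref{invariancef} transfers the a.e.-in-$\o'$ inclusion to the realizations, giving $v_\e(x)\in\alpha(u_\e(x),T_{x/\e}\o)$ for a.e.\ $x\in D$. Combined with the hypothesis \eqref{e:inclusion}, the monotonicity of $\alpha(\cdot,T_{x/\e}\o)$ yields the pointwise inequality
\[
	\big(E_\o^\e(x)-v_\e(x)\big)\cdot\big(J_\o^\e(x)-u_\e(x)\big)\ge 0\qquad\text{a.e. }x\in D.
\]

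Next I pass to the limit $\e\to 0$. Expanding the product into its four bilinear terms, I apply the Div-Curl Lemma \ref{lemma:divcurl} to each pairing: $E_\o^\e\cdot J_\o^\e$ uses \eqref{e:bound}; the mixed terms $E_\o^\e\cdot u_\e$ and $v_\e\cdot J_\o^\e$ combine \eqref{e:bound} with $\Div u_\e=0$ and $\Curl v_\e=0$; and $v_\e\cdot u_\e$ uses only the vanishing divergence and curl. Each term converges in $\mathscr D'(D)$ to the product of the corresponding weak limits, whence
\[
	\big(E_\o^\e-v_\e\big)\cdot\big(J_\o^\e-u_\e\big)\weakst\big(E_\o^0-\eta\big)\cdot\big(J_\o^0-\xi\big)\quad\text{in }\mathscr D'(D).
\]
Testing the nonnegative left-hand side against nonnegative $\varphi\in\mathscr D(D)$ and passing to the limit gives $\big(E_\o^0(x)-\eta\big)\cdot\big(J_\o^0(x)-\xi\big)\ge 0$ for a.e.\ $x\in D$.

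Finally I remove the dependence of the exceptional set on $(\xi,\eta)$: the null set above a priori depends on the chosen test point, so I run the construction along a countable set $\{(\xi_k,\eta_k)\}$ dense in the graph $\mathcal G_{\alpha_0}$, which exists because $\alpha_0$ acts on the finite-dimensional $\R^n$ and has closed graph. Outside a single $x$-null set the inequality then holds for all $k$ simultaneously, and closedness of $\mathcal G_{\alpha_0}$ upgrades it to every $(\xi,\eta)\in\mathcal G_{\alpha_0}$; maximal monotonicity of $\alpha_0$ (inherited from $\alpha$ via Proposition \ref{prop-f0} and Theorem \ref{th:inherit}) then forces $E_\o^0(x)\in\alpha_0(J_\o^0(x))$ for a.e.\ $x$, which is \eqref{hom1}. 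I expect the main obstacle to be the $\e\to 0$ passage in the duality product: weak convergence alone cannot be passed to the limit in the nonlinear monotonicity inequality, and it is precisely here that the compensated-compactness structure \eqref{e:bound}, matched against the divergence-free/curl-free recovery fields produced by scale disintegration, becomes indispensable; the measure-theoretic bookkeeping (transfer of the null set to the realizations via \eqref{invariancef} and the countable dense exhaustion of $\mathcal G_{\alpha_0}$) is the secondary technical point.
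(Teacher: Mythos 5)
Your proposal is correct and follows essentially the same route as the paper's proof: scale disintegration (Lemma \ref{lemma:problema}) to build the oscillating recovery fields, Birkhoff's theorem for their weak convergence, the Div-Curl lemma to pass to the limit in the monotonicity inequality, and maximal monotonicity of $\alpha_0$ to conclude. Your extra care with the transfer of null sets to realizations and with the countable dense exhaustion of $\mathcal G_{\alpha_0}$ fills in details the paper leaves implicit, but does not change the argument.
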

\begin{proof}
By Lemma \ref{lemma:problema} for all $\xi,\eta\in \R^n$ such that $\eta \in \alpha_0(\xi)$  there exist $u\in L^p_{\rm sol}(\O;\R^n)$ and $v\in L^q_{\rm pot}(\O;\R^n)$ such that $\mathbb{E}(u)=\xi$, $\mathbb{E}(v)=\eta$, and
\begin{equation}
\label{auxiliary2}
 	v(\omega)\in \alpha(u(\omega),\o), \quad \text{for a.e. }\o\in\O.
\end{equation}
Define the stationary random fields $u_\e, v_\e \colon D\times \O \to \R^n$ as
\[
	u_\e(x,\omega):=u(T_{x/\e}\omega), \quad v_\e(x,\omega):=v(T_{x/\e}\omega).
\]
By \eqref{birk:weak}, for a.e. $\o \in \O$
\[
	x \mapsto u_\e(x,\omega) \in L^p_{\rm loc}(\R^n;\R^n),\qquad x \mapsto v_\e(x,\omega) \in L^q_{\rm loc}(\R^n;\R^n).
\]
Equation \eqref{auxiliary2} implies
\begin{equation}
\label{eq:318}
	v_\e(x,\omega) \in \alpha(u_\e(x,\omega),T_{x/\e}\omega), \quad \textrm{for a.e. }(x,\omega)\in D\times \Omega.
\end{equation}
By Birkhoff's Theorem (and \eqref{birk:weak}, in particular), for a.e. $\o \in \O$ we have 
\begin{equation}
\label{weak}
	u_\e(\cdot,\o) \rightharpoonup \mathbb E(u)\quad \textrm{weakly in }L^p(D;\R^n), \qquad v_\e(\cdot,\o) \rightharpoonup \mathbb E(v) \quad \textrm{weakly in }L^q(D;\R^n).
\end{equation}
Since $\alpha$ is monotone, by \eqref{e:inclusion} and \eqref{eq:318}, for a.e. $\omega \in \O$
\begin{equation}
\label{ineq}
	\int_D (E_\o^\e(x) - v_\e(x,\o))\cdot (J_\o^\e(x) - u_\e(x,\o))\phi(x)\,dx \ge 0,
\end{equation}
for any $\phi\in C_c^\infty(D)$ with $\phi\ge 0$. Since $u_\e$ is solenoidal and $v_\e$ is potential, by \eqref{e:bound}
\begin{align*}
	\{\Curl(E_\o^\e - v_\e(\cdot,\o))\}_\e\text{ is compact in }W^{-1,q}(D),\\
	  \{\Div(J_\o^\e - u_\e(\cdot,\o))\}_\e\text{ is compact in }W^{-1,p}(D).
\end{align*}
By \eqref{e:convergence}, \eqref{weak}, and Lemma \ref{lemma:divcurl}, we can thus pass to the limit as $\e\to 0$ in \eqref{ineq}: 
\[
	\int_D (E_\o^0(x) -\mathbb E( v))\cdot (J_\o^0(x) -\mathbb E( u))\phi(x)\,dx \ge 0, \quad \textrm{for a.e. }\omega \in \O.
\]
Since the last inequality holds for all nonnegative $\phi\in C_c^\infty(D)$, it holds also pointwise, for almost every $x \in D$:
\[
	(E_\o^0(x) -\mathbb E( v))\cdot (J_\o^0(x) -\mathbb E( u)) \ge 0, \quad \textrm{for a.e. }\omega \in \O.
\]
To conclude, since $\mathbb{E}(u)=\xi$, $\mathbb{E}(v)=\eta$ are arbitrary vectors in $\mathcal G_{\alpha_0}$, the maximal monotonicity of $\alpha_0$ implies that
\[
	E_\o^0(x) \in \alpha_0(J_\o^0(x))
\]
for a.e. $x\in D$, $\o\in \O$. 
\end{proof}
\begin{oss}
In this section's results, the function spaces $L^p_\sol(\O)$ and $L^q_\pot(\O)$ can be generalized to a couple of nonempty, closed and convex sets 
\[
	\mathcal U\subset L^p(\O;\R^n), \quad \mathcal V\subset L^q(\O;\R^n)
\]
such that
\[
	\mathbb E(u \cdot v) = \mathbb E(u)\cdot \mathbb E(v), \quad  \forall (u,v)\in \mathcal U \times \mathcal V.
\]
Furthermore, proposition \ref{prop-f0} and Lemma \ref{lemma:problema} remain valid if the previous inequality is weakened to
\[
	\mathbb E(u \cdot v) \geq \mathbb E(u)\cdot \mathbb E(v), \quad  \forall (u,v)\in \mathcal U \times \mathcal V.
\]
\end{oss}

\section{Applications}

\subsection{The Ohm-Hall model}
\label{sec:ohm}

In this paragraph we address the homogenization problem for the Ohm-Hall model for an electric conductor. For further information about the Ohm-Hall effect we refer the reader to \cite[pp. 11-15]{AshcroftMermin}, \cite[Section 22]{LandauLifshitz}.
We consider a non homogeneous electric conductor, that occupies a bounded Lipschitz domain $D \subset \R^3$ and is subjected to a magnetic field. We assume that the electric field $E$ and the current density $J$ fulfill the constitutive law
\begin{equation}
\label{eq:ohmhall}
	E(x,t) = \alpha(J(x,t),x) +h(x)J(x,t) \times B(x,t) + E_a(x),
\end{equation}
where $\alpha(\cdot,x):\R^3 \mapsto \R^3$ is a (single-valued) maximal monotone mapping for a.e.\,$x\in D$, $B$ is the magnetic induction field, $h$ is the (material dependent) Hall coefficient, and $E_a$ is an applied electromotive force. We couple \eqref{eq:ohmhall} with the Faraday law and with the stationary law of charge-conservation:
\begin{align*}
	\Curl E &= - \frac{\partial B}{\partial t},\\
	\Div J &=0.
\end{align*}	
Following \cite{VisintinAsAn2013}, we assume that $h,B,E_a$ are given, we deal with the stationary system, thus dropping the time variable, and we define the maximal monotone operator $\beta(\cdot,x):\R^3 \mapsto \R^3$ and the vector field $g:D \to \R^3$
\[
	\beta(J,x):=\alpha(J,x) +h(x)J \times B(x) + E_a(x),\qquad g(x):=- \frac{\partial B}{\partial t}(x).
\]
A single-valued parameter-dependent operator $\beta$ is \textit{strictly monotone uniformly in} $x$, if there exists $\theta>0$ such that for a.e. $x\in D$
\begin{equation}
\label{eq:smon}
	(\beta(v_1,x) - \beta(v_2,x))\cdot (v_1-v_2)   \geq \theta{\| v_1-v_2\|}^2\quad \forall\,v_1,v_2\in \R^3.
\end{equation}

The following existence and uniqueness result is a classical consequence of the maximal monotonicity of $\alpha$ (see, e.g., \cite{Showalter97, VisintinAsAn2013}).

\begin{theorem}
\label{thV}
Let $D \subset \R^3$ be a bounded Lipschitz domain. Let $\{\beta(\cdot,x)\}_{x\in D}$ be a family of single-valued maximal monotone operators on $\R^3$. Assume moreover that there exist constants $a,c>0$ and $b\geq 0$ such that for a.e.  $x\in D,\,\forall v \in \R^3$
\begin{align}
	|\beta(x,v)| &\leq c(1+|v|), \label{eq:bounded} \\
	\beta(x,v)\cdot v &\geq a|v|^2 -b. \label{eq:coercivity}
\end{align}
Let $g \in L^2(D;\R^3)$ be given, such that $\nabla \cdot g=0$, distributionally. Then, there exists $E,J\in L^2(D;\R^3)$ such that
\begin{equation}
\label{eq:estimates}
	{\|E\|}_{L^2} +{\|J\|}_{L^2}\leq C\left(1+{\|g\|}_{L^2}\right)
\end{equation}
and, denoting by $\nu$ the outward unit normal to $\partial D$,
\begin{align}
	 E(x) &= \beta(J(x),x)			& &\text{in }D, \label{P:incl}\\
	 \nabla \times E(x) &= g(x) 	& &\text{in }D, \label{P:ele}\\
	 \nabla \cdot J(x)&=0 			& &\text{in }D, \label{P:magn}\\
	 E(x) \times \nu(x) &=0				& &\text{on }\partial D. \label{P:bound}
\end{align}
Moreover, if $\beta$ is strictly monotone uniformly in $x\in D$, then the field $J$ is uniquely determined, while if $\beta^{-1}$ is strictly monotone uniformly in $x\in D$, then the field $E$ is uniquely determined.
\end{theorem}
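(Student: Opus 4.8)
The plan is to recast the boundary value problem \eqref{P:incl}--\eqref{P:bound} as a single operator equation on a closed subspace of $L^2(D;\R^3)$ and to solve it by the classical surjectivity theorem for coercive monotone operators on a Hilbert space. Set $H:=L^2(D;\R^3)$ and let $N$ be the closure in $H$ of $\{\nabla\phi:\phi\in H^1_0(D)\}$; by the Poincar\'e inequality the map $\phi\mapsto\nabla\phi$ is an isomorphism onto its image, so in fact $N=\{\nabla\phi:\phi\in H^1_0(D)\}$ is already closed, and each of its elements is curl-free with vanishing tangential trace. A direct integration by parts shows that $N^\perp=\{w\in H:\nabla\cdot w=0\text{ in }\mathscr D'(D)\}$, i.e.\ $N^\perp$ consists exactly of the weakly solenoidal fields. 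Thus the constraint $\nabla\cdot J=0$ reads $J\in N^\perp$, while the pair $\nabla\times E=g$, $E\times\nu=0$ says that $E$ lies in the affine space $E_g+N$, where $E_g$ is any fixed field with $\nabla\times E_g=g$, $E_g\times\nu=0$. The orthogonality $N\perp N^\perp$ is the static analogue of the cancellation underlying Lemma~\ref{lemma:divcurl}.

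Next I would introduce the Nemytskii operator $\mathcal B\colon H\to H$, $(\mathcal B J)(x):=\beta(J(x),x)$, which by the growth bound \eqref{eq:bounded} is well defined and bounded, and which is continuous and monotone because $\beta(\cdot,x)$ is single-valued, everywhere defined and maximal monotone. Denoting by $P\colon H\to N^\perp$ the orthogonal projection, I set $\mathcal A:=P\,\mathcal B|_{N^\perp}\colon N^\perp\to N^\perp$. The system is then equivalent to the equation $\mathcal A J=PE_g$ in $N^\perp$: a solution $J\in N^\perp$ gives $E:=\mathcal B J$ with $E-E_g\in(N^\perp)^\perp=N$, whence $\nabla\times E=g$, $E\times\nu=0$ and, by definition, $E=\beta(J,\cdot)$, while $\nabla\cdot J=0$ holds since $J\in N^\perp$. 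On the Hilbert space $N^\perp$ the operator $\mathcal A$ is bounded, continuous and monotone (for $J_1,J_2\in N^\perp$ one has $P(J_1-J_2)=J_1-J_2$, so $\langle\mathcal AJ_1-\mathcal AJ_2,J_1-J_2\rangle=\int_D(\beta(J_1)-\beta(J_2))\cdot(J_1-J_2)\,dx\ge0$), and it is coercive by \eqref{eq:coercivity}, since $\langle\mathcal AJ,J\rangle=\int_D\beta(J)\cdot J\,dx\ge a\|J\|_{L^2}^2-b|D|$. The surjectivity theorem for coercive, bounded, hemicontinuous monotone operators (see \cite{Brezis73,Showalter97}) then produces $J$, and the chain $a\|J\|^2-b|D|\le\langle PE_g,J\rangle\le\|E_g\|\,\|J\|$, combined with $\|E\|_{L^2}=\|\beta(J,\cdot)\|_{L^2}\le C(1+\|J\|)$ and $\|E_g\|\le C\|g\|$, yields \eqref{eq:estimates}.

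For the uniqueness statements, suppose $(E_1,J_1)$ and $(E_2,J_2)$ both solve the system. Then $J_1-J_2\in N^\perp$ and $E_1-E_2\in N$, so by orthogonality $\int_D(E_1-E_2)\cdot(J_1-J_2)\,dx=0$. If $\beta$ is strictly monotone uniformly in $x$, the same integral is $\ge\theta\|J_1-J_2\|_{L^2}^2$ by \eqref{eq:smon}, forcing $J_1=J_2$; if instead $\beta^{-1}$ is uniformly strictly monotone, it is $\ge\theta\|E_1-E_2\|_{L^2}^2$, forcing $E_1=E_2$.

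The main obstacle is not the monotone-operator step but the underlying linear analysis, and in particular the existence of a particular potential $E_g\in L^2(D;\R^3)$ with $\nabla\times E_g=g$, $E_g\times\nu=0$: without it the admissible set for $E$ is empty. This is exactly where the hypothesis $\nabla\cdot g=0$ is used, and on a general bounded Lipschitz domain it must in principle be complemented by the vanishing of the fluxes of $g$ through the connected components of $\partial D$ (equivalently, orthogonality to the finite-dimensional space of harmonic fields); for a simply connected domain with connected boundary, $\nabla\cdot g=0$ alone suffices. These facts, together with the identification of $N^\perp$ with the solenoidal fields, are classical in the theory of the div--curl system (Girault--Raviart, Dautray--Lions), so I would quote them rather than reprove them.
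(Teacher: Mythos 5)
The paper does not actually prove this theorem: it is stated as ``a classical consequence of the maximal monotonicity'' with a pointer to \cite{Showalter97} and \cite{VisintinAsAn2013}, so there is no internal proof to match yours against. Your reconstruction is precisely the standard argument behind that citation: decompose $L^2(D;\R^3)$ as $N\oplus N^\perp$ with $N=\nabla H^1_0(D)$ and $N^\perp$ the weakly solenoidal fields, reduce \eqref{P:incl}--\eqref{P:bound} to $P\mathcal B J=PE_g$ on $N^\perp$, and invoke Browder--Minty surjectivity for the bounded, continuous, coercive monotone operator $P\mathcal B|_{N^\perp}$; the a priori bound and the uniqueness statements then follow as you describe. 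The verifications of monotonicity, coercivity and of the equivalence between the operator equation and the boundary value problem are all correct, and you rightly identify that the only nontrivial linear-analysis input is the existence of a potential $E_g$ with $\nabla\times E_g=g$, $E_g\times\nu=0$ and $\|E_g\|_{L^2}\le C\|g\|_{L^2}$.

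One point deserves sharper emphasis: the topological caveat you attach to the existence of $E_g$ also affects your uniqueness argument. You deduce $E_1-E_2\in N$ from the fact that $E_1-E_2$ is curl-free with vanishing tangential trace, but on a general bounded Lipschitz domain this set is $\nabla H^1_0(D)$ only up to the finite-dimensional space of harmonic fields (curl-free, tangential trace zero); such fields are divergence-free, hence lie in $N^\perp$ rather than $N$, and the key orthogonality $\int_D(E_1-E_2)\cdot(J_1-J_2)\,dx=0$ can fail. So the uniqueness conclusion, exactly like the existence of $E_g$, requires either a topological hypothesis on $D$ (simply connected with connected boundary) or that one interpret \eqref{P:ele} and \eqref{P:bound} as the statement $E\in E_g+N$ in the weak formulation, which is in effect what the weak formulation \eqref{eq:weak} used later in the paper does. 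With that reading your proof is complete; without it, the gap is in the theorem's statement as much as in your argument, and it is inherited from the references the paper cites.
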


\begin{remark} Conditions \eqref{P:ele}--\eqref{P:magn} have to be intended in the weak sense -- see below -- while \eqref{P:bound} holds in $H^{-1/2}(\partial D;\R^3)$. Note also that the divergence of $g$ vanishes naturally according to \eqref{P:ele}. 
\end{remark}

Let $(\O,\mathcal{A},\mu)$ be a probability space endowed with a 3-dimensional ergodic dynamical system $T_x \colon \O \to \O$, with $x\in \R^3$. Let $\{\alpha(\cdot,\o)\}_{\o\in \O}$ be a family of maximal monotone operators on $\R^3$, and let 
\begin{equation}
\label{hyp:data}
	h \in L^\infty(\O), \quad B \in L^\infty(\O;\R^3), \quad E_a\in L^2(\O;\R^3).
\end{equation}
For any $J \in \R^3$ and for any $(x,\o)\in D\times\O$ let 
\begin{equation}
\label{hyp:beta}
	\beta(J,\o):=\alpha(J,\o)+h(\o)J \times B(\o)+E_a(\o).
\end{equation}

In order to apply the scale integration procedure, we assume that 
\begin{equation}
\label{hyp:fsc}
	\text{the representative function $f$ of $\beta$ is coercive, in the sense of \eqref{coercivity},}
\end{equation}
moreover, to ensure uniqueness of a solution $(E,J)$, we assume that
\begin{equation}
\label{hyp:smon}
	\beta\text{ and }\beta^{-1} \text{ are strictly monotone, uniformly with respect to }x\in D.
\end{equation}
As in the previous section $\beta_0$ stands for the maximal monotone operator represented by $f_0$ given by \eqref{f0}. For any $\e>0$ define 
\[
	\beta_\e(\cdot,x,\o):=\beta(\cdot,T_{x/\e}\o).
\]
Then $\{\beta_\e(\cdot,x,\o)\}_{(x,\o)\in D\times \O}$ is a family of maximal monotone operators on $\R^3$. Let $g_\e \in L^2(D\times \O;\R^3)$ with $g_\e \rightharpoonup g$ in $L^2(D;\R^3)$ for some $g \in L^2(D;\R^3)$, for a.e. $\o\in\O$; assume that 
\begin{equation}
\label{hyp:divge}
	\nabla \cdot g_\e=0, \quad \text{in }\mathscr D'(D), \text{ for a.e. }\o\in\O.
\end{equation} 

We are ready to state and prove the homogenization result for the Ohm-Hall model.

\begin{theorem}\label{ohm-hall}
Assume that \eqref{hyp:data}--\eqref{hyp:divge} are fulfilled. Then

1. For $\mu$-a.e. $\o\in\O$, for any $\e>0$ there exists $(E_\o^\e,J_\o^\e)\in L^2(D;\R^3) \times L^2(D;\R^3)$ such that 
\begin{align}
	& E_\o^\e(x) = \beta_\e(J_\o^\e(x),x,\o)			& &\text{in }D, \label{P:incl-eps}\\
	& \nabla \times E_\o^\e(x) = g_\e(x,\o) 	& &\text{in }D, \label{P:ele-eps}\\
	& \nabla \cdot J_\o^\e(x)=0 			& &\text{in }D, \label{P:magn-eps}\\
		& E_\o^\e(x) \times \nu(x) =0				& &\text{on }\partial D. \label{P:bound-eps}
\end{align}

2. There exists $(E,J)\in L^2(D;\R^3) \times L^2(D;\R^3)$ such that, up to a subsequence, 
\begin{equation}
\label{eq:conv}
	E_\o^\e \rightharpoonup E\quad \text{and}\quad J_\o^\e \rightharpoonup J 
\end{equation}
as $\e\to0$, weakly in $L^2(D;\R^3)$. 

3. The limit couple $(E,J)$ is a weak solution of
\begin{align}
	& E(x) = \beta_0(J(x))			& &\text{in }D, \label{P:incl-hom}\\
	& \nabla \times E(x) = g(x) 	& &\text{in }D, \label{P:ele-hom}\\
	& \nabla \cdot J(x)=0 			& &\text{in }D, \label{P:magn-hom}\\
	& E(x) \times \nu(x) =0				& &\text{on }\partial D. \label{P:bound-hom}
\end{align}
\end{theorem}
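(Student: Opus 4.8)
The plan is to verify the three claims by reducing them to results established earlier in the paper: claim~1 follows from the existence Theorem~\ref{thV}, claim~2 follows from the uniform a~priori estimates, and claim~3 is an application of the abstract homogenization Theorem~\ref{newmain}. First I would establish claim~1: for $\mu$-a.e.\ $\o\in\O$ and each fixed $\e>0$, the operator $\beta_\e(\cdot,x,\o)=\beta(\cdot,T_{x/\e}\o)$ is a family of single-valued maximal monotone operators on $\R^3$, and by the stationarity of $\beta$ together with the growth hypotheses \eqref{eq:bounded}--\eqref{eq:coercivity} inherited from \eqref{hyp:data}--\eqref{hyp:beta}, the pair $\beta_\e$ satisfies the hypotheses of Theorem~\ref{thV}. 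Since $g_\e(\cdot,\o)\in L^2(D;\R^3)$ is divergence-free by \eqref{hyp:divge}, Theorem~\ref{thV} yields the existence of $(E_\o^\e,J_\o^\e)$ solving \eqref{P:incl-eps}--\eqref{P:bound-eps}.

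For claim~2, I would invoke the estimate \eqref{eq:estimates} from Theorem~\ref{thV}: since $g_\e\rightharpoonup g$ weakly in $L^2(D;\R^3)$, the sequence $\{g_\e(\cdot,\o)\}_\e$ is bounded in $L^2(D;\R^3)$, hence so is $\{(E_\o^\e,J_\o^\e)\}_\e$ by \eqref{eq:estimates}. By weak compactness of bounded sets in the Hilbert space $L^2(D;\R^3)\times L^2(D;\R^3)$, there exists a subsequence and a limit $(E,J)$ such that \eqref{eq:conv} holds. A priori this limit may depend on $\o$; the strict monotonicity hypothesis \eqref{hyp:smon} guarantees uniqueness of the solution at each scale, and I would use this, together with the uniqueness of the limit problem \eqref{P:incl-hom}--\eqref{P:bound-hom} (again from Theorem~\ref{thV} applied to $\beta_0$), to argue that the full sequence converges and that $(E,J)$ is in fact deterministic.

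For claim~3, the key is to check that the hypotheses of Theorem~\ref{newmain} are met with the identifications $J_\o^\e\leftrightarrow J_\o^\e$ and $E_\o^\e\leftrightarrow E_\o^\e$, so that \eqref{hom1} gives $E_\o^0(x)\in\beta_0(J_\o^0(x))$ a.e.\ in $D$. The inclusion \eqref{e:inclusion} is exactly \eqref{P:incl-eps}; the weak convergences \eqref{e:convergence} are \eqref{eq:conv}; and the compactness conditions \eqref{e:bound} follow from the equations \eqref{P:ele-eps}--\eqref{P:magn-eps}: indeed $\Div J_\o^\e=0$ and $\Curl E_\o^\e=g_\e(\cdot,\o)$ with $\{g_\e(\cdot,\o)\}_\e$ bounded in $L^2(D;\R^3)$, so both curl and divergence sequences are relatively compact in the relevant negative Sobolev spaces by the compact embedding $L^2(D)\hookrightarrow W^{-1,2}(D)$. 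The remaining equations \eqref{P:ele-hom}--\eqref{P:bound-hom} for the limit follow by passing to the limit in the distributional formulations of \eqref{P:ele-eps}--\eqref{P:bound-eps}, which is straightforward since $\Curl$ and $\Div$ are linear and continuous for the weak topology, and $g_\e(\cdot,\o)\rightharpoonup g$.

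The main obstacle I anticipate is the passage from the possibly $\o$-dependent weak limit produced by compactness in claim~2 to a deterministic limit, and the careful bookkeeping of the null sets: each application of Birkhoff's theorem and each use of Theorem~\ref{newmain} holds only for $\mu$-a.e.\ $\o$, and one must ensure these exceptional sets can be taken uniformly so that the conclusion \eqref{P:incl-hom}--\eqref{P:bound-hom} is meaningful. The strict monotonicity \eqref{hyp:smon}, which forces uniqueness at every scale and in the limit, is precisely the tool that resolves this, upgrading subsequential convergence to convergence of the whole family and collapsing the $\o$-dependence of the limit.
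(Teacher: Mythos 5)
Your proposal is correct and follows essentially the same route as the paper: Theorem \ref{thV} for existence at each scale, the a priori estimate \eqref{eq:estimates} for weak compactness, Theorem \ref{newmain} (with the div--curl compactness \eqref{e:bound} supplied by \eqref{P:ele-eps}--\eqref{P:magn-eps}) for the limit constitutive relation, and uniqueness of the limit problem to eliminate the $\o$-dependence. The only step you leave implicit is why the limit problem is uniquely solvable: one must first show that strict monotonicity of $\beta$ and $\beta^{-1}$ is inherited by $\beta_0$ and $\beta_0^{-1}$, which is precisely Lemma \ref{lemma:strictmon} in the paper.
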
 

\begin{proof}
1. Assumption \eqref{hyp:smon} implies that $\beta$ is single valued and that almost every realization $(x,v)\mapsto \beta(v,T_x\o)$ satisfies the boundedness and coercivity assumptions \eqref{eq:bounded} and \eqref{eq:coercivity}. Therefore, by Theorem \ref{thV} for almost any $\o\in\O$ and for any $\e>0$ problem \eqref{P:incl-eps}--\eqref{P:bound-eps} has a unique solution.

2. Let $\o\in \O$ be fixed. By \eqref{eq:estimates} the families $\{E_\o^\e\}_\e$ and $\{J_\o^\e\}_\e$ are weakly relatively compact in $L^2(D;\R^3)$, therefore, there exist a subsequence $\e_n \to 0$ and a couple $(E_\o,J_\o)\in L^2(D;\R^3) \times L^2(D;\R^3)$ satisfying \eqref{eq:conv}. A priori, $(E_\o,J_\o)$ depends on $\o\in \O$.

3.  The weak formulation of \eqref{P:ele-eps}--\eqref{P:bound-eps} is: 
\begin{equation}
\label{eq:weak}
	\int_D E_\o^\e \cdot (\nabla \times \phi) + J_\o^\e \cdot \nabla \psi\,dx = \int_D g_\e \cdot \phi\, dx,
\end{equation}
for all $\phi\in \{H^1(D;\R^3): \phi \times \nu=0 \text{ on }\partial D\}$, for all $\psi\in H^1_0(D;\R)$. Passing to the limit in \eqref{eq:weak}, one gets
\[
	\int_D E_\o \cdot (\nabla \times \phi) + J_\o \cdot \nabla \psi\,dx = \int_D g \cdot \phi\, dx,
\]	
which is exactly the weak formulation of \eqref{P:ele-hom}--\eqref{P:bound-hom}. Equations \eqref{P:ele-eps} and \eqref{P:magn-eps} imply that $\{E_\o^\e\}_\e$ and $\{J_\o^\e\}_\e$ satisfy also the div-curl compactness condition \eqref{e:bound}. Therefore, we can apply the abstract stochastic homogenization Theorem \ref{newmain}, which yields 
\[
	E_\o(x) = \beta_0(J_\o(x)).
\]
We have thus proved that $(E_\o,J_\o)$ is a weak solution of \eqref{P:incl-hom}--\eqref{P:bound-hom}. In order to conclude we have to eliminate the dependence on $\o\in \O$.

4. By Lemma \ref{lemma:strictmon} and assumption \eqref{hyp:smon}, $\beta_0$ and $\beta_0^{-1}$ are strictly monotone, and therefore \eqref{P:incl-hom}--\eqref{P:bound-hom} admits a unique solution. Thus, $(E,J):=(E_\o,J_\o)$ is independent of $\o \in \O$. 
\end{proof}

\subsection{Nonlinear elasticity}
\label{ssec:nle}

Another straightforward application of the homogenization theorem \ref{newmain} is given in the framework of deformations in continuum mechanics (see, e.g., \cite[Chapter 3]{Ciarlet}). Elastic materials are usually described through the deformation vector $u:D\times(0,T) \to \R^3$ and the stress tensor $\sigma:D\times(0,T) \to \R^{3 \times 3}_s$. Here $D\subset \R^3$ is the spatial domain and $\R^{3 \times 3}_s$ the space of symmetric 3x3 matrices. We assume  the following constitutive relation relating stress and deformation:
\begin{equation}
\label{eq:nlelastic}
	\sigma(x,t) = \beta(\nabla u(x,t),x),
\end{equation}
where $\beta(\cdot,x):\R^{3\times 3} \mapsto \R^{3\times 3}$ is a (single-valued) maximal monotone mapping for a.e.\,$x\in D$. We couple \eqref{eq:nlelastic} with the conservation of linear momentum:
\begin{equation*}
	\rho\partial_t^2 u - \nabla \cdot \sigma = f,
\end{equation*}	
where $\rho$ is the density and $f$ represents the external forces. For sake of simplicity, we choose to deal with the stationary system only and we set $\rho\partial_t^2 u=0$.

 The following existence and uniqueness result is a classical consequence of the maximal monotonicity of $\beta$ (see, e.g., \cite{Evans02,Showalter97}).

\begin{theorem}
\label{thV2}
Let $D \subset \R^3$ be a bounded Lipschitz domain. Let $\{\beta(\cdot,x)\}_{x\in D}$ be a family of single-valued maximal monotone operators on $\R^{3\times 3}$ that satisfy \eqref{eq:bounded} and \eqref{eq:coercivity}. Let $f \in L^2(D;\R^3)$ be given. Then, there exists $\sigma\in L^2(D;\R^{3\times 3})$ and $u\in H^1_0(D;\R^3)$ such that
\begin{equation}
\label{Q:estimates}
	{\|u\|}_{H^1} +{\|\sigma\|}_{L^2}\leq C\left(1+{\|f\|}_{L^2}\right)
\end{equation}
and, denoting by $\nu$ the outward unit normal to $\partial D$,
\begin{align}
	 \sigma(x) &= \beta(\nabla u(x),x)			& &\text{in }D, \label{Q:incl}\\
	 -\nabla \cdot \sigma(x) &= f(x) 				& &\text{in }D, \label{Q:ele}\\
	 u(x) &=0				& &\text{on }\partial D. \label{Q:bound}
\end{align}
Moreover, if $\beta$ is strictly monotone uniformly in $x\in D$, then  $u$ is uniquely determined, while if $\beta^{-1}$ is strictly monotone uniformly in $x\in D$, then $\sigma$ is uniquely determined.
\end{theorem}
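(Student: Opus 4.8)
The plan is to recast the boundary value problem \eqref{Q:incl}--\eqref{Q:bound} as a single operator equation on the reflexive Banach space $X:=H^1_0(D;\R^3)$ and to invoke the classical surjectivity theorem for monotone operators (Browder--Minty). Eliminating $\sigma$ through the constitutive law \eqref{Q:incl}, the system reduces to $-\nabla\cdot\beta(\nabla u,\cdot)=f$ with $u\in X$. Accordingly I would define $A\colon X\to X'=H^{-1}(D;\R^3)$ by
\[
	\langle Au,\phi\rangle := \int_D \beta(\nabla u(x),x):\nabla\phi(x)\,dx,\qquad \phi\in X,
\]
so that the weak form of \eqref{Q:ele} becomes $Au=f$, where $f$ is identified with the functional $\phi\mapsto\int_D f\cdot\phi\,dx$ in $X'$.

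The core of the argument is to verify that $A$ satisfies the four hypotheses of Browder--Minty. First, $A$ is bounded: the growth bound \eqref{eq:bounded} makes the Nemytskii operator $v\mapsto\beta(v,\cdot)$ map $L^2(D;\R^{3\times3})$ into itself, whence $\|Au\|_{X'}\le\|\beta(\nabla u,\cdot)\|_{L^2}\le C(1+\|u\|_X)$. Second, $A$ is monotone, since by the pointwise monotonicity of $\beta(\cdot,x)$,
\[
	\langle Au_1-Au_2,u_1-u_2\rangle=\int_D\big(\beta(\nabla u_1,x)-\beta(\nabla u_2,x)\big):\big(\nabla u_1-\nabla u_2\big)\,dx\ge0.
\]
Third, $A$ is coercive: from \eqref{eq:coercivity},
\[
	\langle Au,u\rangle\ge\int_D\big(a|\nabla u|^2-b\big)\,dx=a\|\nabla u\|_{L^2}^2-b|D|,
\]
and since $\|\nabla u\|_{L^2}$ is an equivalent norm on $X$ by Poincaré's inequality, $\langle Au,u\rangle/\|u\|_X\to+\infty$ as $\|u\|_X\to\infty$. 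Fourth, $A$ is hemicontinuous: each $\beta(\cdot,x)$, being single-valued, everywhere defined and monotone on the finite-dimensional space $\R^{3\times3}$, is continuous, so $\beta$ is a Carathéodory map and \eqref{eq:bounded} renders its Nemytskii operator continuous from $L^2$ to $L^2$, which gives the demicontinuity (a fortiori hemicontinuity) of $A$.

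Once these properties are in place, Browder--Minty furnishes a solution $u\in X$ of $Au=f$. Setting $\sigma:=\beta(\nabla u,\cdot)\in L^2(D;\R^{3\times3})$ produces \eqref{Q:incl} by definition, $Au=f$ is exactly the weak form of \eqref{Q:ele}, and $u\in H^1_0$ encodes \eqref{Q:bound}. The estimate \eqref{Q:estimates} follows by testing $Au=f$ with $u$ and combining the coercivity bound above with Young's and Poincaré's inequalities to control $\|u\|_{H^1}$, and then using \eqref{eq:bounded} to bound $\|\sigma\|_{L^2}$. For uniqueness, if $\beta$ is strictly monotone uniformly in $x$, testing the difference of two solutions against $u_1-u_2$ forces $\nabla u_1=\nabla u_2$ and hence $u_1=u_2$ by Poincaré; symmetrically, if $\beta^{-1}$ is strictly monotone, then $\nabla\cdot(\sigma_1-\sigma_2)=0$ tested against $u_1-u_2$ gives $\int_D(\sigma_1-\sigma_2):(\nabla u_1-\nabla u_2)\,dx=0$, and the strict monotonicity of $\beta^{-1}$ forces $\sigma_1=\sigma_2$ a.e.

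The step I expect to be most delicate is the hemicontinuity, or more precisely the continuity of the superposition operator: it rests on the classical but not entirely obvious fact that an everywhere-defined single-valued monotone operator on a finite-dimensional space is automatically continuous, together with Krasnoselskii's theorem on Nemytskii operators. All the remaining properties are direct pointwise consequences of \eqref{eq:bounded}--\eqref{eq:coercivity}.
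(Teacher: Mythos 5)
Your proof is correct and is precisely the standard Browder--Minty surjectivity argument that the paper itself does not spell out but delegates to the cited references (Evans, Showalter); the reduction to $Au=f$ on $H^1_0(D;\R^3)$, the four hypotheses, the a priori estimate by testing with $u$, and both uniqueness cases all check out. The only point left tacit is the measurability of $x\mapsto\beta(v,x)$ needed for the Nemytskii operator to be well defined, which the paper likewise assumes implicitly.
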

As above, we consider a family of maximal monotone operators $\{\beta(\cdot,\o)\}_{\o\in \O}$ on $\R^{3\times 3}$,  $\beta_0$ stands for the maximal monotone operator represented by $f_0$, and for any $\e>0$  
\[
	\beta_\e(\cdot,x,\o):=\beta(\cdot,T_{x/\e}\o)
\]
defines a family of maximal monotone operators on $\R^{3\times 3}$. Let $f_\e \in L^2(D\times \O;\R^3)$ with $f_\e \rightharpoonup f$ in $L^2(D;\R^3)$ for some $f \in L^2(D;\R^3)$, for a.e. $\o\in\O$. The correspondent homogenization theorem is the following.

\begin{theorem}
\label{mech}
Assume that \eqref{hyp:fsc} and \eqref{hyp:smon} are fulfilled. Then

1. For $\mu$-a.e. $\o\in\O$, for any $\e>0$ there exist $(u_\o^\e,\sigma_\o^\e)\in H^1_0(D;\R^3) \times L^2(D;\R^3)$ such that 
\begin{align}
	& \sigma_\o^\e(x) = \beta_\e(\nabla u_\o^\e(x),x,\o)			& &\text{in }D, \label{Q:incl-eps}\\
	& -\nabla \cdot \sigma_\o^\e(x) = f_\e(x,\o) 	& &\text{in }D, \label{Q:ele-eps}\\
	& u_\o^\e(x)  =0				& &\text{on }\partial D. \label{Q:bound-eps}
\end{align}

2. There exist $(u,\sigma)\in H^1_0(D;\R^3) \times L^2(D;\R^3)$ such that, up to a subsequence, 
\begin{equation}
\label{Q:conv}
	u_\o^\e \rightharpoonup u\quad \text{and}\quad \sigma_\o^\e \rightharpoonup \sigma 
\end{equation}
as $\e\to0$, weakly in $H^1(D;\R^3)$ and $L^2(D;\R^3)$, respectively. 

3. The limit couple $(u,\sigma)$ is a weak solution of
\begin{align}
	& \sigma(x) = \beta_0(\nabla u(x))			& &\text{in }D, \label{Q:incl-hom}\\
	& -\nabla \cdot \sigma(x) = f(x) 	& &\text{in }D, \label{Q:ele-hom}\\
	& u(x) =0				& &\text{on }\partial D. \label{Q:bound-hom}
\end{align}
\end{theorem}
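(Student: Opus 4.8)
The plan is to follow the proof of Theorem \ref{ohm-hall} almost verbatim, since the two problems have the same architecture (existence for fixed $\e$, weak compactness, passage to the limit via monotonicity and the div-curl lemma, and removal of the $\o$-dependence by uniqueness), with one genuine structural difference: here the input of the constitutive operator is the strain $\nabla u$, which is \emph{curl-free}, while the output is the stress $\sigma$, which carries the \emph{controlled divergence} $\Div\sigma=-f_\e$. Thus the roles of the potential and solenoidal fields — equivalently of $\Div$ and $\Curl$ in Lemma \ref{lemma:divcurl} — are interchanged with respect to the Ohm–Hall case. Note also that throughout $p=q=2$, so the spaces $L^p$ and $L^q$ coincide and the Fitzpatrick coercivity \eqref{coercivity} is symmetric in its two arguments.

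For Step 1 (existence for fixed $\e$) I would deduce the hypotheses of Theorem \ref{thV2} from \eqref{hyp:smon}. Strict monotonicity of $\beta$, testing \eqref{eq:smon} with $v_2=0$, yields the coercivity \eqref{eq:coercivity}; strict monotonicity of $\beta^{-1}$ forces $\beta(\cdot,\o)$ to be single-valued and Lipschitz, hence \eqref{eq:bounded}: indeed $(v_1-v_2)\cdot(\beta(v_1)-\beta(v_2))\ge\theta|\beta(v_1)-\beta(v_2)|^2$ together with Cauchy–Schwarz gives $|\beta(v_1)-\beta(v_2)|\le\theta^{-1}|v_1-v_2|$. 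These constants are uniform in $x$, so by stationarity a.e.\ realization $(x,v)\mapsto\beta(v,T_{x/\e}\o)$ satisfies \eqref{eq:bounded}–\eqref{eq:coercivity}, and Theorem \ref{thV2} supplies a unique $(u_\o^\e,\sigma_\o^\e)$ solving \eqref{Q:incl-eps}–\eqref{Q:bound-eps}. For Step 2 (compactness), the a priori estimate \eqref{Q:estimates} and the $L^2$-boundedness of $\{f_\e\}$ (from $f_\e\weak f$) bound $\{u_\o^\e\}$ in $H^1_0(D;\R^3)$ and $\{\sigma_\o^\e\}$ in $L^2(D;\R^{3\times3})$; extracting a subsequence gives \eqref{Q:conv} with $\nabla u_\o^\e\weak\nabla u$ in $L^2$. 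The linear relations \eqref{Q:ele-hom}, \eqref{Q:bound-hom} then follow by letting $\e\to0$ in the weak formulation $\int_D\sigma_\o^\e\cdot\nabla\phi\,dx=\int_D f_\e\cdot\phi\,dx$, $\phi\in H^1_0(D;\R^3)$.

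Step 3, the identification $\sigma=\beta_0(\nabla u)$, is the crux and the place where the role-swap must be handled. I would rewrite \eqref{Q:incl-eps} as $\nabla u_\o^\e\in\beta^{-1}(\sigma_\o^\e,T_{x/\e}\o)$ and apply Theorem \ref{newmain} to the maximal monotone operator $\beta^{-1}$ (maximal monotone because $\beta$ is, with coercive Fitzpatrick representation $f_{\beta^{-1}}(\eta,\xi)=f(\xi,\eta)$, so \eqref{coercivity} is inherited), under the correspondence $J\leftrightarrow\sigma$, $E\leftrightarrow\nabla u$. The hypotheses of Theorem \ref{newmain} now hold: $\{\Div\sigma_\o^\e\}=\{-f_\e\}$ is relatively compact in $W^{-1,2}(D)$ by Rellich's theorem, $\Curl\nabla u_\o^\e=0$ is trivially compact, and \eqref{Q:conv} provides the weak convergences. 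Concretely, scale-disintegration (Lemma \ref{lemma:problema}) applied to $\beta^{-1}$ produces, for every $(\xi,\eta)\in\mathcal G_{\beta_0}$, oscillating recovery fields $\tilde U_\e$ (solenoidal, in the stress slot) and $\tilde V_\e$ (potential, in the strain slot) lying on the graph, so that monotonicity gives
\[
	\int_D (\sigma_\o^\e-\tilde U_\e)\cdot(\nabla u_\o^\e-\tilde V_\e)\,\phi\,dx\ge0,\qquad 0\le\phi\in C_c^\infty(D),
\]
which passes to the limit row-by-row through Lemma \ref{lemma:divcurl}; maximal monotonicity of $\beta_0$ then closes the argument pointwise. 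Because the two orthogonal subspaces are exchanged, the operator produced is $(\beta^{-1})_0$, and $\nabla u\in(\beta^{-1})_0(\sigma)$ is equivalent to $\sigma\in\beta_0(\nabla u)$ once $\beta_0$ is understood as the scale integration computed with the potential space $\mathcal V^p_{\rm pot}(\O;\R^n)$ in the strain variable and the solenoidal space $\mathcal V^q_{\rm sol}(\O;\R^n)$ in the stress variable — a reading permitted by the Remark following Theorem \ref{newmain}. Finally (Step 4), Lemma \ref{lemma:strictmon} together with \eqref{hyp:smon} yields strict monotonicity of $\beta_0$ and $\beta_0^{-1}$, so \eqref{Q:incl-hom}–\eqref{Q:bound-hom} has a unique solution by Theorem \ref{thV2}; hence $(u,\sigma)$ is independent of $\o$ and the full sequence converges.

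I expect the only real obstacle to be Step 3: one cannot feed $(\nabla u_\o^\e,\sigma_\o^\e)$ into Theorem \ref{newmain} directly, since $\{\Div\nabla u_\o^\e\}=\{\Delta u_\o^\e\}$ is not compact; the correct move is to homogenize $\beta^{-1}$ (equivalently, to perform scale integration with the solenoidal and potential subspaces interchanged) and to verify that the resulting relation is exactly $\sigma=\beta_0(\nabla u)$. Everything else is a transcription of the Ohm–Hall argument.
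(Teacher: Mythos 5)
Your proof is correct and follows essentially the same route as the paper: the paper's proof of Theorem \ref{mech} likewise reduces Steps 1--2 to the Ohm--Hall argument and then applies Theorem \ref{newmain} ``with $\sigma$ in place of $J$ and $\nabla u$ in place of $E$'', which is exactly the inversion/role-swap of the divergence-controlled and curl-controlled slots that you carry out explicitly. Your justification that the resulting relation $\nabla u\in(\beta^{-1})_0(\sigma)$ is equivalent to $\sigma\in\beta_0(\nabla u)$, with $\beta_0$ built from the correctly matched potential/solenoidal subspaces, makes explicit a point the paper leaves implicit.
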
 
\begin{proof}
Steps 1. and 2. follow exactly as in the proof of Theorem \ref{ohm-hall}.  

3.  The weak formulation of \eqref{Q:ele-eps}--\eqref{Q:bound-eps} is the following: 
\begin{equation}
\label{Q:weak}
	\int_D \sigma_\o^\e \cdot \nabla \phi\,dx = \int_D f_\e  \phi\, dx,
\end{equation}
for all $\phi\in H^1_0(D)$. Passing to the limit as $\e \to 0$, one gets
\[
	\int_D \sigma_\o \cdot \nabla \phi\,dx = \int_D f  \phi\, dx,
\]	
which is exactly the weak formulation of \eqref{Q:ele-hom}--\eqref{Q:bound-hom}. Equation \eqref{Q:ele-eps} and estimate \eqref{Q:estimates} imply that $\{\sigma_\o^\e\}_\e$ and $\{\nabla u_\o^\e\}_\e$ satisfy also the div-curl compactness condition \eqref{e:bound}
\[	
	\{ \Div \sigma_\o^\e\}_{\e \geq 0} \text{ is compact in }W^{-1,2}(D;\R^3),\quad 	
		\{ \Curl \nabla u_\o^\e\}_{\e \geq 0} \text{ is compact in }W^{-1,2}(D;\R^{3 \times 3}).
\]
Therefore, we can apply the abstract stochastic homogenization Theorem \ref{newmain}, (with $\sigma$ in place of $J$ and $\nabla u$ in place of $E$), which yields 
\[
	\sigma_\o(x) = \beta_0(\nabla u_\o(x)).
\]
Finally, the strict monotonicity of the limit operators $\beta_0$ and $\beta_0^{-1}$ yields uniqueness and therefore independence of $\o$ for the solution $(u,\sigma)$.
\end{proof}

\bibliographystyle{abbrv}

\end{document}